\titleformat{\section}[hang]%
{\bfseries\large}{\thesection.}{1ex}{}%
\titleformat{\subsection}[hang]%
{\bfseries}{\thesubsection}{1ex}{}%
\theoremstyle{plain}
\newtheorem{theorem}{Theorem}[section]
\newtheorem{lemma}[theorem]{Lemma}
\newtheorem{proposition}[theorem]{Proposition}
\newtheorem{corollary}[theorem]{Corollary}
\newtheorem{definition}[theorem]{Definition} 
\theoremstyle{definition}
\newtheorem{remark}[theorem]{Remark}
\title{\vskip 5pt \bf Level~$\epsilon$}
\author{\itshape\bfseries {Francisco MARMOLEJO \quad Mat\'\i as MENNI}}
\date{}
\begin{document}
\maketitle


\newcommand{\yy}{\ensuremath{\mathbf{y}}}

\newcommand{\Set}{\mathbf{Set}}
\newcommand{\con}{\mathbf{Set}}
\newcommand{\Sets}{\ensuremath{\Set}}

\newcommand{\Top}{\mathbf{Top}}
\newcommand{\Arc}{\mathbf{Arc}}
\newcommand{\Doc}{\mathbf{Doc}}
\newcommand{\Lin}{\mathbf{Lin}}
\newcommand{\LH}{\mathbf{LH}}

\newcommand{\cat}[1]{\mathcal{#1}} 
\newcommand{\escat}[1]{\cat{#1}}
\newcommand{\psh}[1]{\widehat{#1}}
\newcommand{\Psh}[1]{\widehat{#1}}
\newcommand{\Sh}{\mathrm{Sh}}
\newcommand{\Sep}{\mathrm{Sep}}
\newcommand{\colim}{colim}
\newcommand{\dom}{dom}

\newcommand{\calA}{\ensuremath{\escat{A}}} 
\newcommand{\calB}{\ensuremath{\escat{B}}} 
\newcommand{\calC}{\ensuremath{\escat{C}}} 
\newcommand{\calD}{\ensuremath{\escat{D}}} 
\newcommand{\calE}{\ensuremath{\escat{E}}} 
\newcommand{\calF}{\ensuremath{\escat{F}}} 
\newcommand{\calH}{\ensuremath{\escat{H}}} 
\newcommand{\calI}{\ensuremath{\escat{I}}} 
\newcommand{\calK}{\ensuremath{\escat{K}}} 
\newcommand{\calL}{\ensuremath{\escat{L}}} 
\newcommand{\calQ}{\ensuremath{\escat{Q}}} 
\newcommand{\calS}{\ensuremath{\escat{S}}} 
\newcommand{\calT}{\ensuremath{\escat{T}}} 
\newcommand{\calW}{\ensuremath{\escat{W}}} 
\newcommand{\calX}{\ensuremath{\escat{X}}} 
\newcommand{\calZ}{\ensuremath{\escat{Z}}} 

\newcommand{\bfA}{\mathbf{A}}
\newcommand{\bfL}{\mathbf{L}}

\newcommand{\opCat}[1]{\ensuremath{{#1}^{\mathrm{op}}}}

\newcommand{\Nat}{\ensuremath{\mathbb{N}}}
\newcommand{\Int}{\ensuremath{\mathbb{Z}}}
\newcommand{\reals}{\mathbb{R}}
\newcommand{\complex}{\mathbb{C}}


\newcommand{\Rig}{\mathbf{Rig}}
\newcommand{\Ring}{\mathbf{Ring}}
\newcommand{\riRig}{\mathbf{riRig}}
\newcommand{\FinSet}{\Set_f}


\newcommand{\Cat}{\mathbf{Cat}}
\newcommand{\rGrph}{\psh{\Delta_1}}

\newcommand{\Ext}{\mathbf{Ext}}

\newcommand{\Fam}{\mathrm{Fam}}
\newcommand{\Hom}{\mathrm{Hom}}

\newcommand{\oalpha}{\overline{\alpha}} 
\newcommand{\obeta}{\overline{\beta}} 
\newcommand{\oeta}{\overline{\eta}} 
\newcommand{\osigma}{\overline{\sigma}}
\newcommand{\otau}{\overline{\tau}} 
\newcommand{\otheta}{\overline{\theta}}
\newcommand{\okappa}{\overline{\kappa}}
\newcommand{\ophi}{\overline{\phi}} 


\newcommand{\Nil}{\textrm{Nil}}
\newcommand{\Red}{\mathbf{Red}}

\newcommand{\twopl}[2]{\ensuremath{\left\langle #1, #2 \right\rangle}}

\newcommand{\OmegaBase}{\mathit{2}}


\cfoot{}
\thispagestyle{empty}
\vskip 25pt
\begin{adjustwidth}{0.5cm}{0.5cm}
{\small
{\bf R\'esum\'e.} 
Lawvere a observ\'e que certains `gros' topos en g\'eom\'etrie alg\'ebrique sugg\`erent l'existence d'un `niveau infinit\'esimal', \'etroitement li\'e aux alg\`ebres locales de dimension finie. Motiv\'es par cette observation, nous proposons une d\'efinition \'el\'ementaire de {\em level~$\epsilon$} associ\'ee \`a un morphisme géométrique local, établissons quelques propriétés de base pertinentes suggérées par l'intuition géométrique et donnons une description concrète du niveau~$\epsilon$ déterminé par plusieurs morphismes géométriques pré-cohésifs. \\
{\bf Abstract.} 
Lawvere has observed that certain  `gros' toposes in algebraic geometry suggest the existence of an `infinitesimal level', closely related to finite-dimensional local algebras. Motivated by this observation we propose an elementary definition of  {\em level~$\epsilon$} associated to a  local geometric morphism, establish some relevant basic properties suggested by geometric intuition, and give concrete descriptions of the level~$\epsilon$ determined by several pre-cohesive geometric morphisms.
\\
{\bf Keywords.} Axiomatic Cohesion, graphic toposes, algebraic geometry, SDG.\\
{\bf Mathematics Subject Classification (2010).} 18B25, 18F20, 14A25, 51K10.
}
\end{adjustwidth}


\section{Introduction}

The formulation of Axiomatic Cohesion \cite{Lawvere07} and its development in the last ten years naturally invites to revisit the ideas and concrete problems outlined in \cite{Lawvere91}. We consider here a specific question in the dimension theory proposed in Section~{II} of the latter reference:

\begin{quotation}
The infinitesimal spaces, which contain the base topos in its non-Becoming aspect, are a crucial step toward determinate Becoming, but fall short of having among themselves enough connected objects, i.e. they do not in themselves constitute fully a `category of cohesive unifying Being.' In examples the four adjoint functors relating their topos to the base topos coalesce into two (by the theorem that a finite-dimensional local algebra has a unique section of its residue field) and 
the infinitesimal spaces may well negate
the largest essential subtopos of the ambient one which has that property. 
This level may be called `dimension~$\epsilon$'; calling the levels (i.e. the subtoposes essential over the base) `dimensions' does not imply that they are linearly ordered nor that the Aufhebung process touches each of them. The infinitesimal spaces provide (in many
ways) a good example of a non trivial unity-and-identity-of-opposites inside the ambient topos of Being: explicitly recognizing the {\em two} inclusions, as spaces which could be called infinitesimal and formal spaces respectively, may help clarify the confusing but powerful
interplay between these two classes which are opposite but in themselves
identical. The calculation of the $\epsilon$-skeleton and $\epsilon$-coskeleton,
of a space which is neither, needs to be carried out, and also the calculation
of the Aufhebung of dimension~$\epsilon$.
\end{quotation}

Our purpose is to confirm the suggestion that, in many examples of cohesion, there exists  a ``largest essential subtopos of the ambient one which has" the property that ``the four adjoint functors to the base coalesce into two".
In fact, we turn the suggestion into a rigorous definition of the level~$\epsilon$ determined, if it exists, by a local geometric morphism ${\calE \rightarrow \calS}$.
When it exists, it is an essential subtopos ${\calE_{\epsilon} \rightarrow \calE}$ with special properties.  We prove that level~$\epsilon$  exists in many examples and  we give an explicit description. In particular, if $\calE$ is the Zariski topos determined by the field of complex numbers, the site for $\calE_{\epsilon}$ will be shown to be closely related to local algebras, as suggested by the quotation above.

Although some of the theory is developed in more generality, the typical topos of Being that we have in mind is the domain of a pre-cohesive geometric morphism as defined, for example, in \cite{LawvereMenni2015}. We recall most of the definitions but the reader is assumed to be familiar with the ideas therein.

In Section~\ref{SecLevelsAndDimensions} we recall in more detail the basics of the dimension theory mentioned above. One way to start is to fix a local geometric morphisms ${p : \calE \rightarrow \calS}$ (typically with extra properties), consider its centre ${\calS \rightarrow \calE}$ as `level 0' and study the levels above it.
In Section~\ref{SecSubtoposesAboveCentreNew} we analyse the subtoposes above the centre of a local presheaf topos.
The level~$\epsilon$ determined by a local geometric morphism is defined in Section~\ref{SecEpsilon}. In the remaining sections we calculate the level~$\epsilon$ of several examples. In Section~\ref{SecPresheaf} we  analyse the level~$\epsilon$ of local presheaf toposes in general and in some simple cases. 
In Section~\ref{SecWeilSubquality} we show that the Weil topos (determined by the field $\complex$ of complex numbers) underlies a  subquality of  the Gaeta topos determined by the same field.
This  is shown to be level~$\epsilon$  in  Section~\ref{SecGaeta}.

\begin{remark} Since Gaeta toposes are perhaps not yet widely known, we include here a brief description. If $\calD$ is a small extensive category then the finite families ${(D_i \rightarrow D \mid i\in I)}$ of maps in $\calD$ such that the induced ${\sum_{i\in I} D_i \rightarrow D}$ is an isomorphism form the basis of a Grothendieck topology.
The associated topos ${G\calD}$ of sheaves is called the {\em Gaeta topos} (of $\calD$) and it is equivalent to the category of finite-product preserving functors ${\opCat{\calD} \rightarrow \Set}$. The `Gaeta topology' is subcanonical so that the Yoneda embedding of $\calD$ into the topos of presheaves factors through the Gaeta topos but, moreover, the factorization ${\calD \rightarrow G\calD}$ preserves finite coproducts. See, for example, the end of page~3 in \cite{Lawvere91} or  Section~2 in \cite{Lawvere08}.
If $T$ is an algebraic theory whose category $\calA$ of finitely presented algebras is coextensive then  we may naturally refer to ${G(\opCat{\calA})}$ as the Gaeta topos determined by $T$. For instance, we have the Gaeta toposes determined rigs, by distributive lattices or by $k$-algebras, where $k$ is a ring. We may even push the terminology further and simply speak (as we have done above) of the Gaeta topos determined by $\complex$, instead of the Gaeta topos determined by the theory of $\complex$-algebras.
\end{remark}

 In Section~\ref{SecZariski} we show that the Zariski topos determined by $\complex$ (which is not a presheaf topos) also has a level~$\epsilon$ and that it coincides with that of the Gaeta topos.

\section{Levels and dimensions}
\label{SecLevelsAndDimensions}

   In this section we recall some of the material in Section~{II} of \cite{Lawvere91} which proposes to consider essential subtoposes of a given topos of spaces (or a {\em category of Being}) as a refined notion of `dimensions' in that topos. The quotations in this section are taken from that reference.
First notice that reflective subcategories of a fixed category $\calX$ may be partially ordered as follows.

\begin{lemma}\label{generalsobreadjuntos}
If the adjunctions ${F\dashv U:\calA\to\calX}$ and ${L\dashv R:\calC\to\calX}$ are  such that ${U:\calA\to\calX}$ and ${R:\calC\to \calX}$ are full and faithful then the following conditions are equivalent:
\begin{itemize}
\item[\textup{(i)}] There is an adjunction $H\dashv K:\calA\to\calC$ with $K$ full and faithful such that $RK\simeq U$
(or, equivalently, $HL\simeq F$).
\item[\textup{(ii)}]  $U:\calA\to \calX$ factors (up to iso) through $R:\calC\to\calX$.
\item[\textup{(iii)}]  $F:\calX\to\calA$ factors (up to iso) through $L:\calX\to\calC$.
\item[\textup{(iv)}] ${\nu_U:U\to RLU}$ is invertible, where ${\nu : 1_{\calX} \rightarrow R L}$  is the unit of ${L \dashv R}$.
\item[\textup{(v)}] $F\nu:F\to FRL$ is invertible.  
\end{itemize}
\end{lemma}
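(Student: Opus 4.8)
The plan is to exploit the two standard facts about the reflective situation and then to run a short chain of implications, regarding conditions (ii) and (iv) (which say that the \emph{right} adjoint $U$ factors through $R$) as one group and conditions (iii) and (v) (which say that the \emph{left} adjoint $F$ factors through $L$) as a second group, the real work being to link the two groups. First I would record the consequences of full faithfulness: since $U$ and $R$ are full and faithful, the counits $\varepsilon\colon FU\to 1_\calA$ and $LR\to 1_\calC$ are invertible; the triangle identity for $L\dashv R$ then forces $L\nu$ to be invertible, and it gives that $\nu_X$ is invertible whenever $X$ lies in the essential image of $R$. Thus $\calC$ is, up to equivalence, the full subcategory of $\calX$ on the objects with $\nu_X$ invertible, and dually $\calA$ is the full subcategory on the objects with $\eta_X$ invertible, where $\eta\colon 1_\calX\to UF$ is the unit of $F\dashv U$.

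The first group is an immediate equivalence (ii)$\Leftrightarrow$(iv): if $U\simeq RK$ then each $Ua$ lies in the essential image of $R$, so $\nu_{Ua}$ is invertible by the recollection above; conversely, if $\nu U$ is invertible then $K:=LU$ satisfies $RK=RLU\simeq U$. I would close this group up to (i) using (i)$\Rightarrow$(ii), which is the definition, together with (iv)$\Rightarrow$(i), which is the main construction: setting $K:=LU$ and $H:=FR$, one verifies by the usual adjunction calculus that $H\dashv K$ (chaining $F\dashv U$, the isomorphism $Ua\simeq RLUa$ from (iv), and full faithfulness of $R$), that $K$ is full and faithful (chaining $L\dashv R$, the same isomorphism, and full faithfulness of $U$), and that $RK\simeq U$; the companion isomorphism $HL\simeq F$ then comes for free.

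The second group is the easy equivalence (iii)$\Leftrightarrow$(v): given $HL\simeq F$, the comparison $F\nu$ is identified with $HL\nu$, which is invertible since $L\nu$ is; conversely one takes $H:=FR$, so that $FRL=HL$ and $F\nu\colon F\to HL$ is precisely the required factorization. To weld the groups together I would use the trivial implication (i)$\Rightarrow$(iii) (it is the parenthetical clause of (i)) and, in the reverse direction, (v)$\Rightarrow$(iv).

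I expect (v)$\Rightarrow$(iv) to be the only genuine obstacle, because the naive computation delivers only half of what is wanted. Assuming $F\nu$ invertible, the triangle identity for $F\dashv U$ produces a retraction of $\nu_{Ua}$, namely $g=U\varepsilon_a\circ U(F\nu_{Ua})^{-1}\circ\eta_{RLUa}$, with $g\circ\nu_{Ua}=1_{Ua}$ by naturality of $\eta$ and the triangle identity; so a priori $\nu_{Ua}$ is merely a split monomorphism. To upgrade it to an isomorphism I would invoke locality: $RLUa$ lies in the essential image of $R$ and $\nu_{Ua}$ is an $L$-equivalence (as $L\nu_{Ua}$ is invertible), so precomposition with $\nu_{Ua}$ is a bijection on maps into $RLUa$; the idempotent $e:=\nu_{Ua}\circ g$ and the identity both restrict along $\nu_{Ua}$ to $\nu_{Ua}$ itself, whence $e=1$ and $\nu_{Ua}$ is invertible. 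Assembling the chains (i)$\Rightarrow$(ii)$\Rightarrow$(iv)$\Rightarrow$(i) and (i)$\Rightarrow$(iii)$\Rightarrow$(v)$\Rightarrow$(iv) then shows all five conditions equivalent.
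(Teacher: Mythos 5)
Your proof is correct, and although it is built around the same adjunction $FR\dashv LU$ as the paper's, the logical organization is genuinely different. The paper runs the cycles (i)$\Rightarrow$(iii)$\Rightarrow$(v)$\Rightarrow$(i) and (i)$\Rightarrow$(ii)$\Rightarrow$(iv)$\Rightarrow$(i), putting all the weight on (v)$\Rightarrow$(i): it writes down explicit candidate unit and counit for $FR\dashv LU$ as pasted 2-cells (using $(F\nu)^{-1}$ in the counit) and verifies the triangle identities directly, so it never needs a link between the group $\{\mathrm{(ii),(iv)}\}$ and the group $\{\mathrm{(iii),(v)}\}$ other than through (i). You instead make (iv)$\Rightarrow$(i) the main construction (by hom-set chaining rather than 2-cell calculus) and weld the two groups with a direct proof of (v)$\Rightarrow$(iv); that welding step is the real novelty of your route and also the delicate point, and you handle it correctly: the triangle identity for $F\dashv U$ only yields a retraction $g$ of $\nu_{Ua}$, and you upgrade the split mono to an isomorphism by noting that $L\nu_{Ua}$ is invertible, so precomposition with $\nu_{Ua}$ is bijective on maps into objects of the form $Rc$, forcing the idempotent $\nu_{Ua}\circ g$ to equal the identity. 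This costs you an extra argument the paper sidesteps, but it buys a proof in which every implication is local and elementary, and the orthogonality argument in (v)$\Rightarrow$(iv) is a reusable recognition principle for the essential image of $R$.
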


\begin{proof} Clearly (i) implies (iii), and (iii) trivially implies (v) since $R$ is full and faithful. 

Assume (v) and let ${\xi : L R \rightarrow 1_{\calC}}$ be the counit of ${L \dashv R}$.
Define $H=FR$ and $K=LU$; we show that $H\dashv K$ by showing that the 2-cells
\[
\xy
(0,9)*+{\calC}="1";
(10,0)*+{\calX}="2",
(20,-9)*+{\calA}="3",
(30,0)*+{\calX}="4",
(40,9)*+{\calC}="5",
\POS "1" \ar_{R} "2",
\POS "2" \ar^(.4){1_\calX} "4",
\POS "1" \ar^{1_\calC} "5",
\POS "2" \ar_{F} "3",
\POS "3" \ar_{U} "4",
\POS "4" \ar_{L} "5",
\POS (26,8) \ar@{=>}_{\xi^{-1}} (26,2),
\POS (21,-1) \ar@{=>}_{\eta} (21,-6),
\endxy
\textup{ \ \ \ \  }
\xy
(0,-9)*+{\calA}="1";
(10,0)*+{\calX}="2",
(20,9)*+{\calC}="3",
(30,0)*+{\calX}="4",
(40,-9)*+{\calA.}="5",
\POS "1" \ar^{U} "2",
\POS "2" \ar_(.3){F} "5",
\POS "1" \ar_{1_\calA} "5",
\POS "2" \ar^{L} "3",
\POS "3" \ar^{R} "4",
\POS "4" \ar^{F} "5",
\POS (24,4) \ar@{=>}_{(F\nu)^{-1}} (24,-1),
\POS (12,-3) \ar@{=>}_{\varepsilon} (12,-8),
\endxy
\]
satisfy the triangular identities, where $\eta$ and $\varepsilon$ are the unit and counit of $F\dashv U$. One of these
triangular identities is trivial. The other is
\[
\xy
(0,-9)*+{\calA}="1";
(10,0)*+{\calX}="2",
(20,9)*+{\calC}="3",
(30,0)*+{\calX}="4",
(40,-9)*+{\calA}="5",
(50,0)*+{\calX}="6",
(60,9)*+{\calC}="7",
\POS "1" \ar^{U} "2",
\POS "2" \ar_(.3){F} "5",
\POS "1" \ar_{1_\calA} "5",
\POS "2" \ar^{L} "3",
\POS "3" \ar^{R} "4",
\POS "3" \ar^{1_\calC} "7",
\POS "4" \ar^{F} "5",
\POS "4" \ar^{1_\calX} "6",
\POS "5" \ar_{U} "6",
\POS "6" \ar_{L} "7",
\POS (24,4) \ar@{=>}_{(F\nu)^{-1}} (24,-1),
\POS (12,-3) \ar@{=>}_{\varepsilon} (12,-8),
\POS (50,8) \ar@{=>}_{\xi^{-1}} (50,3),
\POS (43,-1) \ar@{=>}_{\eta\ } (43,-5),
\endxy
= 1_{LU}.
\]
This equation is equivalent to 
\[
\xy
(10,-9)*+{\calA}="1";
(15,0)*+{\calX}="2",
(20,9)*+{\calC}="3",
(30,0)*+{\calX}="4",
(40,-9)*+{\calA}="5",
(50,0)*+{\calX}="6",
(60,9)*+{\calC}="7",
\POS "1" \ar^{U} "2",
\POS "2" \ar^{L} "3",
\POS "3" \ar_{R} "4",
\POS "3" \ar^{1_\calC} "7",
\POS "4" \ar_{F} "5",
\POS "4" \ar^{1_\calX} "6",
\POS "5" \ar_{U} "6",
\POS "6" \ar_{L} "7",
\POS (50,8) \ar@{=>}_{\xi^{-1}} (50,3),
\POS (43,-1) \ar@{=>}_{\eta\ } (43,-5),
\endxy
=
\xy
(20,9)*+{\calA}="3",
(30,0)*+{\calX}="4",
(40,-9)*+{\calC}="5",
(50,0)*+{\calX}="6",
(60,9)*+{\calA}="7",
(65,0)*+{\calX}="8",
(70,-9)*+{\calC.}="9",
\POS "3" \ar_{U} "4",
\POS "3" \ar^{1_\calA} "7",
\POS "4" \ar_{L} "5",
\POS "4" \ar^{1_\calX} "6",
\POS "5" \ar_{R} "6",
\POS "6" \ar_{F} "7",
\POS "7" \ar^{U} "8",
\POS "8" \ar^{L} "9",
\POS (50,8) \ar@{=>}_{\varepsilon^{-1}} (50,3),
\POS (43,-1) \ar@{=>}_{\nu} (43,-5),
\endxy
\]
To see that this equation holds, replace on the right $U\varepsilon^{-1}$ by $\eta_U$, and then replace 
$L\nu$ by $\xi^{-1}_L$. Observe furthermore, that the counit of $H\dashv K$ is invertible, thus $K$ is
fully faithful. We conclude that (v) implies (i).

The proof that (i) $\Rightarrow$ (ii) $\Rightarrow$ (iv) $\Rightarrow$ (i) is very similar.
\end{proof}

If the equivalent conditions of Lemma~\ref{generalsobreadjuntos} hold then we may say that
the reflective subcategory ${L \dashv R}$ is {\em above} ${F \dashv U}$.

\begin{remark}\label{remarka}
In the situation of Lemma \ref{generalsobreadjuntos}, we may as well assume (as we do in what follows) 
that the adjunction $H\dashv K:\calA\to\calC$ is given by $FR\dashv LU$ with unit and counit given by 
$$\xymatrix{
1_\calC \ar[r]^-{\xi^{-1}} & L R \ar[rr]^-{L\eta_R} && L U F R  \textnormal{ \ \ and \ \ } 
  F R L U \ar[rr]^-{(F\nu_U)^{-1}} && F U \ar[r]^-{\varepsilon} & 1_\calA
}$$
respectively. Observe as well that, if $F$ preserves finite limits, then $H$ also preserves them; and
if $F$ has a left adjoint, then $H$ also has a left adjoint.
\end{remark}

From now on we restrict attention to the case where the ambient category $\calX$ is a fixed topos $\calE$.
In this case Lemma~\ref{generalsobreadjuntos} and Remark~\ref{remarka} imply  the following.

\begin{corollary}\label{estaeralaproposicion3.1}
Given subtoposes $j:\calE_j\to\calE$ and $k:\calE_k\to\calE$ the following conditions are equivalent.
\begin{itemize}
\item[\textup{(i)}] $\calE_k$ is above $\calE_j$. 
\item[\textup{(ii)}] $j_*:\calE_j\to\calE$ factors through $k_*:\calE_k\to\calE$.
\item[\textup{(iii)}] $j^*:\calE\to\calE_j$ factors through $k^*:\calE\to\calE_k$.
\item[\textup{(iv)}] The natural transformation ${\nu_{j_*}: j_* \rightarrow k_* k^* j_*}$ is an isomorphism.
\item[\textup{(v)}] The natural transformation ${j^* \nu : j^*  \rightarrow j^* k_* k^*}$ is an isomorphism. 
\end{itemize}
When this is the case, we can take as witness of the fact that $\calE_k$ is above $\calE_j$ the
geometric morphism  $h:\calE_j\to\calE_k$ such that $h^*=j^*k_*$, $h_*=k^*j_*$ with unit and counit
given by
$$\xymatrix{
1_{k} \ar[r]^-{{\xi'}^{-1}} & k^* k_* \ar[rr]^-{k^*\nu_{k_*}} && k^* j_* j^* k_* 
 \textnormal{ \  and  \ }
j^* k_* k^* j_* \ar[rr]^-{(j^*\nu'_{j_*})^{-1}} && j^* j_* \ar[r]^-{\xi} & 1_{j}
}$$
respectively, where $\nu,\xi$ are the unit and counit of $j:\calE_j\to\calE$ and $\nu',\xi'$ are the corresponding
ones for $k:\calE_k\to\calE$.
\end{corollary}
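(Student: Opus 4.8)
The plan is to read the statement off Lemma~\ref{generalsobreadjuntos} and Remark~\ref{remarka} after fixing a dictionary. A subtopos $j:\calE_j\to\calE$ is a geometric inclusion, so it carries an adjunction $j^*\dashv j_*$ with $j_*$ full and faithful and $j^*$ left exact; likewise $k^*\dashv k_*$ for $k:\calE_k\to\calE$. First I would put $\calX=\calE$ and instantiate the Lemma by taking $F\dashv U$ to be $j^*\dashv j_*$ (so $\calA=\calE_j$, $F=j^*$, $U=j_*$) and $L\dashv R$ to be $k^*\dashv k_*$ (so $\calC=\calE_k$, $L=k^*$, $R=k_*$). The full-and-faithfulness hypotheses on $U$ and $R$ hold because $j_*$ and $k_*$ are the direct-image parts of inclusions, so the Lemma applies directly.

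Under this dictionary the translation of the equivalences is mechanical. Clauses (ii) and (iii) of the Lemma are verbatim the factorizations of $j_*$ through $k_*$ and of $j^*$ through $k^*$. Clauses (iv) and (v), the invertibility of $\nu_U$ and of $F\nu$ with $\nu$ the unit of $L\dashv R$, become the invertibility of the unit $1_\calE\to k_*k^*$ of $k$ whiskered by $j_*$ and by $j^*$, i.e. of $j_*\to k_*k^*j_*$ and of $j^*\to j^*k_*k^*$, exactly as displayed. Finally, clause (i) of the Lemma, the existence of $H\dashv K$ with $K$ full and faithful and $k_*K\simeq j_*$, is by definition the assertion that $\calE_k$ is above $\calE_j$. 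Hence the five equivalences follow at once from Lemma~\ref{generalsobreadjuntos}, with no further argument.

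For the explicit witness I would appeal to Remark~\ref{remarka}, which permits taking $H\dashv K$ to be $FR\dashv LU$, i.e. $j^*k_*\dashv k^*j_*$; thus I set $h^*=j^*k_*$ and $h_*=k^*j_*$. The Remark's unit $L\eta_R\circ\xi^{-1}$ and counit $\varepsilon\circ(F\nu_U)^{-1}$ then transport, under the identification of $\eta,\varepsilon$ with the unit and counit of $j$ and of $\nu,\xi$ with those of $k$, into the composites $(k^*\nu_{k_*})\circ(\xi')^{-1}$ and $\xi\circ(j^*\nu'_{j_*})^{-1}$. In the notation of the statement, where the unit and counit of $j$ are written $\nu,\xi$ and those of $k$ are written $\nu',\xi'$, these are precisely the unit and counit exhibited for $h$. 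Since Remark~\ref{remarka} already asserts that these are the structure maps of the adjunction $H\dashv K$, the triangle identities are inherited and need not be rechecked.

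The one step that is not a mere relabelling, and which I expect to be the crux, is confirming that $h$ is a genuine geometric morphism rather than only an adjoint pair between the two toposes; that is, that its putative inverse image $h^*=j^*k_*$ preserves finite limits. Here I would invoke the closing sentence of Remark~\ref{remarka}: since $F=j^*$ preserves finite limits, so does $H=FR$. Concretely, $k_*$ preserves all limits as a right adjoint and $j^*$ is left exact, so the composite $j^*k_*$ is left exact. With left-exactness in hand, $h$ is a geometric morphism with the asserted inverse and direct images (and indeed an inclusion, since the counit of $H\dashv K$ is invertible), completing the proof.
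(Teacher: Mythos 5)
Your proposal is correct and follows exactly the route the paper intends: the corollary is stated as an immediate instantiation of Lemma~\ref{generalsobreadjuntos} and Remark~\ref{remarka} with $F\dashv U = j^*\dashv j_*$ and $L\dashv R = k^*\dashv k_*$, which is precisely your dictionary. Your extra care in checking that $h^*=j^*k_*$ is left exact (via the closing sentence of Remark~\ref{remarka}, or directly) is a welcome explicit confirmation of a point the paper leaves implicit.
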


A geometric morphism ${f : \calF \rightarrow \calE}$ is called {\em essential} if the inverse image $f^*$ has a left adjoint ${f_! : \calF \rightarrow \calE}$.
Following \cite{Lawvere91}, essential subtoposes of $\calE$  will be called {\em levels}. Notice that for any given level ${l : \calE_l \rightarrow \calE}$, the leftmost adjoint ${l_!}$ is full and faithful (because the direct image $l_*$ is).
Levels may be partially ordered according to their underlying subtoposes as in Corollary~\ref{estaeralaproposicion3.1}.

\begin{quotation}
 The basic idea is simply to identify dimensions
with levels and then try to determine what the general dimensions
are in particular examples. More precisely, a space may be
said to have (less than or equal to) the dimension grasped by a given
level if it belongs to the negative (left adjoint inclusion) incarnation
of that level. 
\end{quotation}

So, for any level ${l : \calE_l \rightarrow \calE}$ and any $X$ in $\calE$,  the counit ${l_! (l^* X) \rightarrow X}$ may be called the {\em $l$-skeleton} of $X$.
The object $X$ is said to be {\em $l$-skeletal} if its $l$-skeleton is an iso, so that
${l_! : \calE_l \rightarrow \calE}$ is the full subcategory of $l$-skeletal objects.
On the other hand, and in accordance with standard terminology, the objects in the full  subcategory ${l_* : \calE_l \rightarrow \calE}$ will be called {\em $l$-sheaves}.

A subtopos ${\calE_j \rightarrow \calE}$  is {\em way-above} a level  ${l : \calE_l \rightarrow \calE}$ if $j$ is above $l$ and, moreover, ${l_! : \calE_l \rightarrow \calE}$ factors through ${j_* : \calE_j \rightarrow \calE}$.
The {\em Aufhebung} of level $l$ is (when it exists) the smallest level of $\calE$ that is way-above $l$.

The Aufhebung of a level need not be easy to calculate. For an illustration of the complexity of the issue see  \cite{Lawvere1989taco}, \cite{KellyLawvere89}, \cite{Roy97}, \cite{Lawvere2002a} and \cite{KennetEtAl2011}.

Recall that a geometric morphism ${p : \calE \rightarrow \calS}$ is {\em local} if ${p_* : \calE \rightarrow \calS}$ has a fully faithful right adjoint (usually denoted by ${p^!}$). For such a $p$, the subtopos ${p_* \dashv p^! : \calS \rightarrow \calE}$ is a level  called the {\em centre} of $p$ and it is sometimes convenient to think of it as the smallest non-trivial level of $\calE$ (or `dimension~0'), especially, if $p$ has further properties:

\begin{quotation}
Within the class of all levels over the base (of course it is a
set in fact if the category of Being is a topos), the base itself is often
further distinguished by having a still further left adjoint to its
discrete inclusion, this extra functor therefore assigning to every
space in Being its set of components.
\end{quotation}

So let us fix a local and essential geometric morphism ${p : \calE \rightarrow \calS}$.
Recall that {\em essential} means that the fully faithful  ${p^* : \calS \rightarrow \calE}$  has a further  left adjoint ${p_! : \calE \rightarrow \calS}$. As quoted above, this left adjoint is thought of as assigning, to each space (i.e. an object in $\calE$), its associated set (i.e. object in $\calS$) of pieces or connected components.
To aid the intuitive discussion, the centre of $p$ will be called  {\em level 0} (of $p$) and its Aufhebung will be called  {\em level~1} (of $p$).

%
%
%
%

\begin{quotation}
Because of the special feature of dimension
zero of having a components functor to it (usually there is
no analogue of that functor in higher dimensions), the definition of
dimension one is equivalent to the quite plausible condition: the
smallest dimension such that the set of components of an arbitrary
space is the same as the set of of components of the skeleton at that
dimension of the space, or more pictorially: if two points of any
space can be connected by anything, then they can be connected by
a curve. Here of course by ``curve" we mean any figure in (i.e. map
to) the given space whose domain is one-dimensional.
\end{quotation}

See Proposition~17 in \cite{Lawvere1989taco} and Proposition in p.~{19} of \cite{Roy97}.
We give a different proof:

\begin{proposition}\label{PropLawvereRoy} For any level   ${l}$   above level~0, $l$  is way-above 0  if and only if, for every $X$ in $\calE$,
${p_! ( l_! ( l^* X)) \rightarrow p_! X}$ is an isomorphism  (where ${ l_! ( l^* X) \rightarrow X}$ is the $l$-skeleton of $X$).
\end{proposition}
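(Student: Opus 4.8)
The plan is to reduce the statement to a single condition on the inverse image $p^*$ and then to characterise that condition in terms of $l$-skeleta. First recall that level~0 is the centre of $p$, so its three adjoints are $0_! = p^*$, $0^* = p_*$ and $0_* = p^!$; in particular $p^*$ is fully faithful. Since the Proposition already assumes that $l$ is above level~0, being way-above~0 amounts to the single extra requirement that $0_! = p^*$ factor through $l_* \colon \calE_l \to \calE$. Because $l_*$ is fully faithful, this holds if and only if $p^* A$ is an $l$-sheaf for every $A$ in $\calS$, i.e. the unit $\eta_{p^* A} \colon p^* A \to l_* l^* (p^* A)$ of $l^* \dashv l_*$ is invertible. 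So the whole Proposition comes down to proving that this last condition is equivalent to the invertibility, for every $X$, of the map $p_! l_! l^* X \to p_! X$ obtained by applying $p_!$ to the $l$-skeleton of $X$.

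The key step is a hom-set reformulation. Write $\beta_X \colon l_! l^* X \to X$ for the $l$-skeleton (the counit of $l_! \dashv l^*$) and fix an object $Y$ of $\calE$. I claim that, for every $X$, precomposition with $\beta_X$ yields a map $\Hom(X,Y) \to \Hom(l_! l^* X, Y)$ which, transported along $l_! \dashv l^*$, is exactly the action $g \mapsto l^* g$ of $l^*$, viewed as $\Hom(X,Y) \to \Hom(l^* X, l^* Y)$; this is a one-line triangle-identity computation. Dually, postcomposition with the unit $\eta_Y \colon Y \to l_* l^* Y$, transported along $l^* \dashv l_*$, is the very same map $g \mapsto l^* g$. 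Consequently, for fixed $Y$ the following are equivalent: precomposition with $\beta_X$ is a bijection for all $X$; $l^*$ is bijective on all hom-sets into $Y$; postcomposition with $\eta_Y$ is a bijection for all $X$; and, by the Yoneda lemma, $\eta_Y$ is invertible, i.e. $Y$ is an $l$-sheaf.

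To finish, transpose the target condition along $p_! \dashv p^*$. By the Yoneda lemma in $\calS$, the map $p_!\beta_X \colon p_! l_! l^* X \to p_! X$ is invertible for all $X$ if and only if, for every $A$ in $\calS$, the induced $\Hom(p_! X, A) \to \Hom(p_! l_! l^* X, A)$ is a bijection for all $X$; and under $p_! \dashv p^*$ this map is precisely precomposition with $\beta_X$ on $\Hom(X, p^* A) \to \Hom(l_! l^* X, p^* A)$. Applying the previous paragraph with $Y = p^* A$, this holds for all $X$ exactly when $p^* A$ is an $l$-sheaf. Letting $A$ range over $\calS$ recovers the condition isolated in the first paragraph, which completes the equivalence.

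I expect the main obstacle to be bookkeeping rather than any genuine difficulty: one has four adjoints for $p$ and three for $l$ in play, and the crux is verifying that the two transposes, precomposition with the skeleton counit $\beta$ and postcomposition with the reflection unit $\eta$, both collapse to the single functor-action $g \mapsto l^* g$ via the triangle identities. Once that identification is in place, both implications of the ``if and only if'' follow from the same Yoneda argument, so the two directions need not be treated separately.
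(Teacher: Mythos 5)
Your proof is correct, and it is essentially the paper's argument unpacked. The paper disposes of the statement in one line by citing Lemma~\ref{generalsobreadjuntos} applied to ${l^* \dashv l_*}$ and ${p_! \dashv p^*}$, whose conditions (ii) and (v) are exactly your ``$p^*$ factors through $l_*$'' and ``$p_!$ inverts the relevant (co)reflection maps''; your hom-set/Yoneda computation re-derives that equivalence by hand. In doing so it makes explicit one step the citation glosses over: condition (v) of Lemma~\ref{generalsobreadjuntos} literally concerns $p_!$ applied to the unit ${X \to l_* (l^* X)}$ of ${l^* \dashv l_*}$, whereas the Proposition is stated in terms of the skeleton counit ${l_!(l^* X) \to X}$ of ${l_! \dashv l^*}$, and your observation that both transposes collapse to ${g \mapsto l^* g}$ is precisely the bridge between the two.
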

\begin{proof}
Apply Lemma~\ref{generalsobreadjuntos} to  ${l^* \dashv l_* : \calE_l \rightarrow \calE}$ and ${p_! \dashv p^* : \calS \rightarrow \calE}$.
\end{proof}

Notice that this result does not assume that level~1 exists. The levels way-above level~0 may be considered as the levels above level~1 even if the latter does no exist. 

\begin{corollary}\label{CorLawvereRoy} Assume that level~1 exists. If a level ${l}$ is above level $0$ then, $l$ is above 1 if and only if ${p_! ( l_! ( l^* X)) \rightarrow p_! X}$ is an isomorphism for every $X$ in $\calE$.
\end{corollary}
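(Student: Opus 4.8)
The plan is to reduce the statement, via Proposition~\ref{PropLawvereRoy}, to a purely order-theoretic fact about levels, and then to invoke the definition of level~1 as an Aufhebung. Since $l$ is assumed above level~0, Proposition~\ref{PropLawvereRoy} applies directly: the condition that ${p_! ( l_! ( l^* X)) \rightarrow p_! X}$ be an isomorphism for every $X$ in $\calE$ is \emph{exactly} the assertion that $l$ is way-above level~0. Hence the corollary is equivalent to the claim that, for a level $l$ above level~0, one has that $l$ is above level~1 if and only if $l$ is way-above level~0. Recall that level~1 is by definition the Aufhebung of level~0, that is, the smallest level way-above level~0; it is the existence of this smallest element that the hypothesis guarantees. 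The implication ``way-above~0 $\Rightarrow$ above~1'' is then immediate: being the least level way-above level~0, level~1 lies below every level way-above level~0.

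For the converse I would check that the levels way-above level~0 form an up-set for the ordering ``above''; applied to level~1 this yields ``above~1 $\Rightarrow$ way-above~0''. Write $l_1$ for level~1 and recall that the leftmost adjoint of level~0 is ${p^* : \calS \rightarrow \calE}$, so that ``way-above level~0'' for a subtopos means being above level~0 together with a factorization of $p^*$ through the relevant direct image. Since $l_1$ is way-above level~0, the functor $p^*$ factors through $(l_1)_*$. Since $l$ is above $l_1$, the equivalence (i)$\Leftrightarrow$(ii) of Corollary~\ref{estaeralaproposicion3.1} provides a factorization of $(l_1)_*$ through ${l_* : \calE_l \rightarrow \calE}$. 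Composing, $p^*$ factors through $l_*$; and $l$ is above level~0 by transitivity of ``above'' (from $l$ above $l_1$ and $l_1$ above level~0). Together these say exactly that $l$ is way-above level~0.

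The argument is short, and its only delicate point is bookkeeping: one must keep straight that the role of $l_!$ for level~0 is played by $p^*$ (not by $p_!$ or $p^!$), and that ``factors through'' chains in the correct direction, so that the two factorizations compose to land $p^*$ inside $l_*$ rather than the reverse. All the real mathematical content has already been absorbed into Proposition~\ref{PropLawvereRoy} and Corollary~\ref{estaeralaproposicion3.1}; what remains is the observation that the levels way-above level~0 are precisely the levels above their least element, level~1.
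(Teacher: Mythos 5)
Your proof is correct and follows exactly the route the paper intends: the paper states this corollary without proof, treating it as immediate from Proposition~\ref{PropLawvereRoy} together with the definition of level~1 as the Aufhebung (smallest level way-above level~0) and the fact that the levels way-above level~0 form an up-set for the ordering ``above''. Your careful verification of that up-set property via Corollary~\ref{estaeralaproposicion3.1}(ii) — chaining the factorizations of $p^*$ through $(l_1)_*$ and of $(l_1)_*$ through $l_*$ — is precisely the bookkeeping the paper leaves implicit.
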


We also recall a general description of levels in  presheaf toposes.

\begin{definition}\label{DefIdempotentIdeal}{\em
An {\em ideal}  of a small category $\calC$ is a class of maps $\calI$ in $\calC$ that satisfies the following two conditions:%
\begin{enumerate}
\item (Right ideal) For every ${g : D \rightarrow C}$ in $\calI$ and ${h : E\rightarrow D}$ in  $\calC$, ${gh \in \calI}$.
\item (Left ideal) For every ${f : C \rightarrow B}$ in $\calC$ and ${g : D\rightarrow C}$ in $\calI$, ${fg \in \calI}$.
\end{enumerate}
An ideal is called {\em idempotent} if for every ${f \in \calI}$ there are ${g, h\in \calI}$  such that ${f = g h}$.}
\end{definition}

Theorem~{4.4} in \cite{KellyLawvere89} shows that levels of the presheaf topos $\Psh{\calC}$ are in bijective correspondence with idempotent ideals of $\calC$. 
If $\calI$ is such an idempotent ideal then the associated Grothendieck topology $J$ is such that,  for each $C$ in $\calC$, a sieve $S$ on $C$ is $J$-covering if and only if it contains all the maps in $\calI$ with codomain $C$.

\section{Subtoposes above the centre of a local map}
\label{SecSubtoposesAboveCentreNew}


Let ${p : \calE \rightarrow \calS}$ be a local geometric morphism.

\begin{proposition}\label{PropAboveCentre}
Let $j:\calE_j\to\calE$ be a subtopos and assume that the following diagram
\[
\xy
(0,6)*+{\calE_j}="1",
(12,6)*+{\calE}="2",
(12,-6)*+{\calS}="3",
\POS "1" \ar^j "2",
\POS "1" \ar_f "3",
\POS "2" \ar^p "3",
\endxy
\]
commutes so that ${p_* j_* = f_* : \calE_j \to \calS}$ and ${j^* p^* = f^* : \calS\to\calE_j}$. Then the following are equivalent,
where $\nu$ is the unit of ${j : \calE_j \to \calE}$:
\begin{enumerate}
\item The subtopos $j:\calE_j\to\calE$ is above the centre of $p$.
\item $p^!:\calS\to\calE$ factors through $j_*:\calE_j\to\calE$.
\item $p_*:\calE\to\calS$ factors through $j^*:\calE\to\calE_j$.
\item The natural transformation $\nu_{p^!}:p^!\to j_*j^*p^!$ is an isomorphism. 
\item The natural transformation $p_*\nu:p_*\to p_*j_*j^*$ is an isomorphism.
\item The geometric morphism $f$ is local and $j_*f^!\simeq p^!$.
\end{enumerate}
In this case, we may assume that
$f^!=j^*p^!$ and that the unit $\overline{\eta}$ and counit $\overline{\varepsilon}$ of $f_*\dashv f^!$ are given by
$$\xymatrix{
1_{\calE_j} \ar[r]^-{\xi^{-1}} & j^* j_* \ar[rr]^-{j^* \eta_{j_*}} &&  j^* p^! p_* j_* = f^! f_* \\
f_* f^! = p_* j_* j^* p^! \ar[rr]^-{(p_*\nu_{p^!})^{-1}} && p_* p^! \ar[r]^-{\varepsilon} & 1_\calS
}$$
respectively, where $\eta$ and $\varepsilon$ are the unit and counit of ${p_* \dashv p^!}$ respectively, and $\xi$ is the
(iso) counit of $j^*\dashv j_*$.
\end{proposition}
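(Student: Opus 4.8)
The plan is to recognize that Proposition~\ref{PropAboveCentre} is essentially a specialization of Corollary~\ref{estaeralaproposicion3.1} to the case where the lower level $\calE_k$ is the centre of $p$. Indeed, the centre is the subtopos ${p_* \dashv p^! : \calS \rightarrow \calE}$, so in the notation of the corollary we take ${k^* = p_*}$, ${k_* = p^!}$, with unit $\eta$ and counit $\varepsilon$ of ${p_* \dashv p^!}$ playing the role of $\nu', \xi'$. The subtopos $j : \calE_j \to \calE$ plays the role of $\calE_j$ in the corollary. Under this dictionary, condition~(1) is exactly ``$\calE_j$ is above the centre'', and conditions (2)--(5) of the proposition are precisely the translations of conditions (ii)--(v) of Corollary~\ref{estaeralaproposicion3.1}: factoring $p^!$ through $j_*$ is ``$k_*$ factors through $j_*$'', factoring $p_*$ through $j^*$ is ``$k^*$ factors through $j^*$'', and the two invertibility conditions on $\nu_{p^!}$ and $p_* \nu$ are the corollary's conditions (iv) and (v) with $k_* = p^!$ and $k^* = p_*$ substituted in. So the equivalence of (1)--(5) follows immediately, and I would state this explicitly rather than reprove it.

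The genuinely new content is condition~(6) and the explicit description of $f^!$ and of the unit/counit of ${f_* \dashv f^!}$. First I would address (6). Assuming any of (1)--(5), I want to produce a right adjoint to ${f_* = p_* j_*}$ that is fully faithful. The natural candidate is ${f^! = j^* p^!}$. To verify ${f_* \dashv f^!}$, I would exhibit the unit $\overline{\eta}$ and counit $\overline{\varepsilon}$ displayed in the statement and check the triangle identities; this is exactly the kind of pasting-diagram verification carried out in the proof of Lemma~\ref{generalsobreadjuntos} for the witness morphism $h$, so I expect it to reduce to the same bookkeeping, using that $\xi$ (the counit of $j^* \dashv j_*$) is invertible and that $p_* \nu_{p^!}$ is invertible by condition~(5). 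Full faithfulness of $f^!$ amounts to showing the counit $\overline{\varepsilon}$ is an isomorphism, which holds because both factors $(p_* \nu_{p^!})^{-1}$ and $\varepsilon$ in its definition are invertible ($\varepsilon$ being the counit of the fully faithful $p^!$). That $f$ is then local is precisely the statement that $f_* = p_* j_*$ has a fully faithful right adjoint, namely $f^!$.

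For the reverse direction of (6)---that locality of $f$ together with ${j_* f^! \simeq p^!}$ implies the common condition---I would observe that ${j_* f^! \simeq p^!}$ is literally statement (2): $p^!$ factors through $j_*$. Finally, for the identification of $\overline{\eta}$ and $\overline{\varepsilon}$, I would note that once (1)--(5) are known equivalent and $f^! = j^* p^!$ is chosen, the formulas for unit and counit are forced to be the pasted composites displayed, by transporting along the isomorphism ${RK \simeq U}$ from Lemma~\ref{generalsobreadjuntos}(i), exactly as recorded in Remark~\ref{remarka}. The main obstacle I anticipate is purely notational: keeping the two families of units and counits ($\eta, \varepsilon$ for $p_* \dashv p^!$ versus $\nu, \xi$ for $j^* \dashv j_*$) straight through the pasting diagrams, and making sure the displayed composites for $\overline{\eta}, \overline{\varepsilon}$ match the abstract witness from the lemma under the substitution ${k_* = p^!}$, ${k^* = p_*}$.
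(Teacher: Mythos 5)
Your proposal is correct and takes essentially the same route as the paper: the authors likewise obtain the equivalence of (1)--(5) by specializing Corollary~\ref{estaeralaproposicion3.1} to the case where the lower subtopos is the centre, read off the unit/counit formulas and the locality of $f$ from the witness adjunction of Remark~\ref{remarka}, observe that ${j_* f^! = j_* j^* p^! \simeq p^!}$ via $\nu_{p^!}$ to get (6), and note that (6) is literally (2) for the converse. The only slip is in your dictionary: since the given subtopos is the one \emph{above} the centre, it must play the role of the corollary's $\calE_k$ and the centre that of the corollary's $\calE_j$ (so $\eta,\varepsilon$ correspond to $\nu,\xi$ rather than to $\nu',\xi'$) --- but the translated conditions you actually write down are the correct ones, so nothing in the argument is affected.
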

\begin{proof} Corollary \ref{estaeralaproposicion3.1} tells us that items 1 to 5 are equivalent; furthermore, Remark \ref{remarka} tells us that we can take unit and counit
of $f_*=p_*j_*\dashv j^*p^!$ as given in the statement of the proposition. So that $f$ is local.
Observe that $j_*f^!=j_*j^*p^!\simeq p^!$ via the iso $\nu_{p^!}$. So any one of the first five conditions implies 6.
Finally, almost immediately,  6 implies~2. 
\end{proof}

So subtoposes above the centre determine local maps towards the base.
Moreover, if $p$ is essential then so is $f$. Indeed, ${f_! = p_! j_*}$ and the composites
$$\xymatrix{
1_{\calE_j} \ar[r]^-{\xi^{-1}} & j^* j_* \ar[rr]^-{j^* \sigma_{j_*}} && j^* p^* p_! j_* = f^* f_! \\
 f_! f^* = p_! j_* j^* p^* \ar[rr]^-{(p_! \nu_{p^*})^{-1}} && p_! p^* \ar[r]^-{\tau}  & 1_{\calS}
}$$
are the unit and counit of ${f_! \dashv f^*}$, where $\sigma$ and $\tau$ are the unit and counit of ${p_! \dashv p^*}$.

For instance, consider a small category $\calC$ with a terminal object so that the canonical geometric morphism ${\Psh{\calC} \rightarrow \Set}$ is local. See C3.6.3(b) in \cite{elephant}.

\begin{lemma}\label{LemForCorLargestEssSubQ} Let ${\calD \rightarrow \calC}$ be a full subcategory such that idempotents split in $\calD$.
Then, the induced essential subtopos ${\Psh{\calD} \rightarrow  \Psh{\calC}}$ is above the centre of $p$ if and only if the subcategory ${\calD \rightarrow \calC}$ contains the terminal object.
\end{lemma}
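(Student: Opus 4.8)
The plan is to make both adjoint strings explicit and then apply Proposition~\ref{PropAboveCentre}. Since $\calC$ has a terminal object $T$, the representable $\yy T$ is terminal in $\Psh{\calC}$, so $p_*$ is evaluation at $T$ and its fully faithful right adjoint is $p^! S = (C \mapsto S^{\calC(T,C)})$. Writing $i : \calD \to \calC$ for the inclusion, the induced essential subtopos $l : \Psh{\calD} \to \Psh{\calC}$ has $l^* = i^*$ (restriction), $l_* = i_* = \mathrm{Ran}_{\opCat{i}}$ and $l_! = i_! = \mathrm{Lan}_{\opCat{i}}$, the two Kan extensions being fully faithful because $i$ is. First I would check that the triangle hypothesis of Proposition~\ref{PropAboveCentre} holds: $l^* p^*$ is the constant-presheaf functor on $\calD$, so $p l$ is the canonical local map $\Psh{\calD} \to \Set$ and the triangle commutes.

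For the direction assuming $T \in \calD$, I would invoke condition~(3) of Proposition~\ref{PropAboveCentre}. When $T$ lies in $\calD$ the object $T$ is also terminal in the full subcategory $\calD$, and evaluation at $T$ factors as $p_* = \mathrm{ev}^{\calD}_T \circ i^* = \mathrm{ev}^{\calD}_T \circ l^*$; thus $p_*$ factors through $l^*$ and $\Psh{\calD}$ is above the centre. (Equivalently, $p l$ is then local, which is condition~(6).)

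The substantive direction is the converse, for which I would use condition~(4): that $\nu_{p^!} : p^! \to l_* l^* p^!$ is invertible. The key simplification is that $i^* \yy T$ is the terminal object of $\Psh{\calD}$, because $\calC(D,T)$ is a singleton for every $D$; hence $(l_* Y)(T) \cong \lim_{\opCat{\calD}} Y$ for every $Y$ on $\calD$. Evaluating the component of $\nu_{p^!}$ at $T$ for the coefficient $S = \emptyset$ then reduces invertibility to the equality $\emptyset = \lim_{\opCat{\calD}}(i^* p^!\emptyset)$. Since $i^* p^!\emptyset$ has value $\emptyset^{\calC(T,D)}$ at $D$, which is empty exactly when $D$ admits a global point $T \to D$, this forces at least one object $D \in \calD$ to carry a point $a : T \to D$.

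The main obstacle, and the only place the hypothesis on idempotents is used, is upgrading this single global point to the conclusion $T \in \calD$. Because $T$ is terminal, the unique map $!_D : D \to T$ satisfies $!_D\, a = 1_T$, so $e = a\,!_D : D \to D$ is idempotent. Splitting $e$ in $\calD$ yields $q : D \to E$ and $s : E \to D$ with $sq = e$, $qs = 1_E$ and $E \in \calD$; using $!_E = !_D s$ one checks $(qa)\,!_E = q(a\,!_D)s = q(sq)s = 1_E$, while $!_E\,(qa) = 1_T$ as an endomap of $T$, so $qa : T \to E$ and $!_E : E \to T$ are mutually inverse and $E \cong T$. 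Hence $T \in \calD$. I expect the explicit evaluation of $l_* l^* p^!$ at $T$ and this final splitting argument to be the only delicate points; the rest is a direct reading of Proposition~\ref{PropAboveCentre}.
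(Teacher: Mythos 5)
Your proposal is correct, but the converse direction follows a genuinely different route from the paper's. The paper first deduces from item~6 of Proposition~\ref{PropAboveCentre} that ${\Psh{\calD} \rightarrow \Set}$ is local, invokes C3.6.3(b) of \cite{elephant} (this is where idempotent splitting enters there) to obtain a terminal object $1_{\calD}$ of $\calD$, and then uses item~3 to get ${X 1_{\calC} \cong X 1_{\calD}}$ for all $X$; applying this to ${X = \calC(\_, 1_{\calD})}$ produces a point ${1_{\calC} \rightarrow 1_{\calD}}$ which fullness forces to be an iso. You bypass the characterization of local presheaf toposes entirely: evaluating the unit condition (item~4) at the coefficient $\emptyset$ and at the object $T$ reduces invertibility to the emptiness of ${\lim_{\opCat{\calD}}(i^* p^! \emptyset)}$, which (since a limit of singletons is a singleton) forces some $D$ in $\calD$ to admit a point ${a : T \rightarrow D}$, and you then split the explicit idempotent ${a\,!_D}$ inside $\calD$ to produce an object isomorphic to $T$. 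Your version is more self-contained --- no external citation is needed for the converse, and it isolates exactly where idempotent splitting is used --- at the cost of a slightly more computational unit calculation; the forward directions are both immediate (the paper checks item~6 via C3.6.3(b), you check item~3 directly), and both arguments conclude, as they must, only that $\calD$ contains an object isomorphic to the terminal object of $\calC$.
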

\begin{proof}
If the subcategory ${\calD \rightarrow \calC}$ contains the terminal object then ${\Psh{\calD} \rightarrow \Set}$ is local by C3.6.3(b) in \cite{elephant} and it is straightforward to check that there is a natural iso as in item~6 of Proposition~\ref{PropAboveCentre}, 
so ${\Psh{\calD} \rightarrow  \Psh{\calC}}$ is above the centre of $p$.

Conversely, assume that ${\Psh{\calD} \rightarrow  \Psh{\calC}}$ is above the centre of $p$.
Then ${\Psh{\calD} \rightarrow \Set}$ is local by item~6 of Proposition~\ref{PropAboveCentre} so, as idempotents split by hypothesis, 
$\calD$ must have a terminal object $1_{\calD}$ by C3.6.3(b) in \cite{elephant}.
So it remains to show that the inclusion ${\calD \rightarrow \calC}$ preserves the terminal object.
To do this let $1_{\calC}$ be the terminal object of $\calC$ and notice that, by   item~3 of  Proposition~\ref{PropAboveCentre}, it must be the case that, for every $X$ in ${\Psh{\calC}}$, ${X 1_\calC \cong X 1_{\calD}}$.
 Taking ${X = \calC(\_, 1_{\calD})}$ we may conclude that ${1_{\calD}}$ has a point and, as ${\calD \rightarrow \calC}$ is fully faithful,
it must be the case that the composite ${1_{\calD} \rightarrow 1_{\calC} \rightarrow 1_{\calD}}$ is the identity on $1_{\calD}$.
So the point ${1_{\calC} \rightarrow 1_{\calD}}$ is an iso.
\end{proof}

On the other hand,  it is not the case that subtoposes that induce local maps are above the centre in general.
For example,  let ${\calC = \{ 0 < \frac{1}{2} < 1 \} }$ be the total order with three elements
and consider the full subcategory ${\calD = \{0 < \frac{1}{2} \} \rightarrow \calC}$ then the following diagram
$$\xymatrix{
\Psh{\calD} \ar[rd] \ar[r] & \Psh{\calC} \ar[d] \\
& \Set
}$$
commutes and both morphisms to $\Set$ are local, but the subtopos ${\Psh{\calD} \rightarrow \Psh{\calC}}$ is not above the center of $p$; as one may show, for example, by checking that ${p_* : \Psh{\calC} \rightarrow \Set}$ does not invert the unit of the subtopos.

Assume from now on that every object of $\calC$ has a point so that the canonical 
${ p : \Psh{\calC} \rightarrow \Set}$ is pre-cohesive.
Let $J$ be a Grothendieck topology on $\calC$ and let ${\Sh(\calC, J) \rightarrow \Psh{\calC}}$ be the associated subtopos.

\begin{lemma}\label{LemGrothendieckAboveCodiscrete}
The following are equivalent:
\begin{enumerate}
\item The subtopos ${\Sh(\calC, J) \rightarrow \Psh{\calC}}$ is above the centre of $p$.
\item For every $C$ in $\calC$ and ${S \in J C}$, $S$ contains all points of $C$.
\item The maximal sieve is the only $J$-cover of $1$. 
\end{enumerate}
\end{lemma}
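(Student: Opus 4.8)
The plan is to keep conditions (2) and (3) --- both purely about the topology $J$ --- together, and to bridge to the topos-theoretic condition (1) through Proposition~\ref{PropAboveCentre}. Since $\Sh(\calC,J)$ has a unique geometric morphism $f$ to $\Set$, and this necessarily agrees with the composite of ${\Sh(\calC,J) \to \Psh{\calC}}$ with $p$, the commuting-triangle hypothesis of Proposition~\ref{PropAboveCentre} is automatic, so all of its items are available. I will use item~2 ($p^!$ factors through $j_*$, i.e.\ $p^! A$ is a $J$-sheaf for every set $A$) as the most convenient reformulation of~(1).

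First I would dispatch (2)$\Leftrightarrow$(3). For (3)$\Rightarrow$(2), given $C$, a cover $S \in JC$ and a point $x \colon 1 \to C$, the pullback sieve $x^{*}S$ on $1$ is a $J$-cover by stability, hence maximal by~(3); thus $\mathrm{id}_{1} \in x^{*}S$, which says exactly that $x = x\,\mathrm{id}_{1} \in S$. For (2)$\Rightarrow$(3) one only needs the case $C = 1$: the unique point of $1$ is $\mathrm{id}_{1}$, so (2) forces every cover of $1$ to contain $\mathrm{id}_{1}$ and therefore to be the maximal sieve.

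The core is (1)$\Leftrightarrow$(2). Here I would first compute $p^{!}$ explicitly: since $p_{*} = \mathrm{ev}_{1}$, one gets for a set $A$ and object $C$ that ${(p^{!}A)(C) = \Set(p_{*}(\calC(-,C)), A) = A^{\calC(1,C)}}$, with restriction along ${g \colon D \to C}$ induced by the post-composition map ${\calC(1,D) \to \calC(1,C)}$. By item~2 of Proposition~\ref{PropAboveCentre}, condition (1) is equivalent to ``$p^{!}A$ is a sheaf for every $A$''. To see that (2) implies this, I would verify the sheaf condition for a cover $S$ of $C$ directly: a matching family for $S$ valued in $p^{!}A$ is determined, through its components at the points of $C$ (which lie in $S$ by~(2) and have domain $1$, so that $(p^!A)(1) \cong A$), by a single function $\calC(1,C) \to A$, and this is its unique amalgamation. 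For the converse I would argue contrapositively with $A = 2$ (the two-element set): if some point $x \colon 1 \to C$ is missing from a cover $S$, then, because $S$ is a right ideal, $x$ factors through no member of $S$; hence the two functions $\calC(1,C) \to 2$ that differ only at $x$ restrict to the same matching family on $S$ while being distinct, defeating the sheaf condition for $p^{!}2$.

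The main obstacle is the explicit identification of $p^{!}$ together with the bookkeeping in the sheaf-condition verification --- in particular the observation that a point $x$ of $C$ factors through a member of the sieve $S$ if and only if $x \in S$, which is precisely what lets the values of a matching family at $1$-domain arrows pin down (and only pin down) the amalgamation on the points belonging to $S$. Everything else is a routine application of the topology axioms and of Proposition~\ref{PropAboveCentre}.
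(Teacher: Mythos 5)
Your proof is correct, but it follows a genuinely different route from the paper's. The paper settles (1)$\Leftrightarrow$(2) in one stroke by citing Corollary~4.5 of \cite{LawvereMenni2015}: under the standing hypothesis that every object of $\calC$ has a point, the centre of $p$ coincides with the double-negation subtopos, which is $\Sh(\calC,K)$ for the topology $K$ whose covering sieves are exactly those containing all points of their codomain; being above the centre then reduces to the inclusion of topologies $J\subseteq K$, which is literally item~(2). You instead re-derive the needed content of that corollary from scratch: you compute $(p^!A)(C)\cong A^{\calC(1,C)}$, translate (1) via item~2 of Proposition~\ref{PropAboveCentre} into the statement that every $p^!A$ is a $J$-sheaf, and verify the sheaf condition by hand, with $A=2$ and a cover missing a point supplying the failure of separatedness for the converse. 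This costs more space but is self-contained, and it nowhere uses the hypothesis that every object of $\calC$ has a point, so it establishes the lemma for any small $\calC$ with a terminal object. Your pullback-stability argument for (2)$\Leftrightarrow$(3) is the standard one and makes explicit what the paper dismisses as ``trivially''/``easily''. The only step worth spelling out when writing this up is the check that the candidate amalgamation $t(x)=s_x$ really restricts to $s_f$ for each $f\colon D\to C$ in $S$: evaluate at a point $y$ of $D$ and use the matching condition $s_{fy}=s_f\cdot y$, which is legitimate because $fy\in S$; your sketch indicates this but does not display it.
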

\begin{proof}
By Corollary~{4.5} in \cite{LawvereMenni2015}, the centre of ${p : \Psh{\calC} \rightarrow \Set}$ coincides with the subtopos of sheaves for the double negation topology.
In other words, ${p^! : \Set \rightarrow \Psh{\calC}}$ coincides with the subtopos  ${\Sh(\calC, K) \rightarrow \Psh{\calC}}$ where a sieve on $C$ is $K$-covering if and only if it contains all points of $C$.
So ${\Sh(\calC, J) \rightarrow \Psh{\calC}}$ is above the centre of $p$ if and only if, for every $C$ in $\calC$,  ${J C \subseteq K C}$.
In other words, the first two items are equivalent.
The second item trivially implies the third.
The third item easily implies the second.
\end{proof}

\section{Subqualities and level $\epsilon$}
\label{SecEpsilon}

Recall that if we denote the counit of ${p_* \dashv p^!}$ by  ${\varepsilon}$, and the unit and counit of ${p^* \dashv p_*}$ by $\alpha$ and $\beta$ then the following diagram commutes
$$\xymatrix{
  p^*  \ar[d]_-{p^* \varepsilon^{-1}} \ar[r]^-{\eta_{p^*}} & p^! p_* p^* \ar[d]^-{p^! \alpha^{-1}} \\
  p^* p_* p^! \ar[r]_-{\beta_{p^!}} & p^!
}$$
and  the composite is denoted by ${\phi : p^* \rightarrow p^!}$. 
Following item~{(c)} in Definition~2 of \cite{Lawvere07} we could say that the {\em Nullstellensatz} holds (for $p$) if ${\phi : p^* \rightarrow p^!}$ is monic.
Recall that the Nullstellensatz holds  if and only if $p$ is hyperconnected.
See  \cite{Johnstone2011} for explicit proofs of the equivalences between different formulations of the Nullstellensatz.

\begin{lemma}\label{LemQ1} If the subtopos ${j : \calE_j \rightarrow \calE}$ is above the centre of ${p :  \calE \rightarrow \calS}$ and ${f = p j : \calE_j \rightarrow \calS}$ denotes the composite local geometric morphism  then, ${\ophi : f^* \rightarrow f^!}$ and ${j^* \phi : j^* p^* \rightarrow j^* p^!}$ are equal. Therefore, if $p$ is hyperconnected then so is $f$.
\end{lemma}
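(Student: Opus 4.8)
The plan is to prove the identity $\ophi = j^* \phi$ explicitly; once it is established the final sentence is immediate. Indeed, the recalled characterisation says that $p$ is hyperconnected exactly when the Nullstellensatz $\phi : p^* \to p^!$ is monic. Since $j^*$ is the inverse image of a geometric morphism it is left exact and hence preserves monomorphisms, so $\ophi = j^* \phi$ is monic whenever $\phi$ is; as $f$ is local (Proposition~\ref{PropAboveCentre}), this is precisely the Nullstellensatz for $f$, and therefore $f$ is hyperconnected.

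To set up the computation I would use Proposition~\ref{PropAboveCentre} and the essential-case discussion following it: since $j$ is above the centre we have $f^* = j^* p^*$, $f_* = p_* j_*$, $f^! = j^* p^!$, the unit of $f^* \dashv f_*$ is the composite-adjunction unit $\bar\alpha = (p_* \nu_{p^*}) \circ \alpha$, and the unit $\oeta$ of $f_* \dashv f^!$ is $(j^* \eta_{j_*}) \circ \xi^{-1}$. Items~(4) and~(5) of Proposition~\ref{PropAboveCentre} make $p_* \nu_{p^!}$ and $p_* \nu_{p^*}$ invertible; combined with $\alpha$ and $\varepsilon$ invertible (because $p^*$ and $p^!$ are fully faithful) this shows $\bar\alpha$ and $\overline{\varepsilon}$ are invertible, so $f^*$ and $f^!$ are fully faithful and the square defining $\ophi$ makes sense. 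I would then compute both $\phi$ and $\ophi$ along the upper-right path of their defining squares, so that $\phi = (p^! \alpha^{-1}) \circ \eta_{p^*}$ and $\ophi = (f^! \bar\alpha^{-1}) \circ \oeta_{f^*}$.

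Expanding $f^! \bar\alpha^{-1} = (j^* p^! \alpha^{-1}) \circ j^* p^! (p_* \nu_{p^*})^{-1}$, the trailing factor $j^* p^! \alpha^{-1}$ is common to $\ophi$ and to $j^* \phi = (j^* p^! \alpha^{-1}) \circ j^* \eta_{p^*}$, so it suffices to identify the two ``unit parts'', i.e. to show
\[
j^* \eta_{p^*} = j^* p^! (p_* \nu_{p^*})^{-1} \circ j^* \eta_{j_* j^* p^*} \circ \xi^{-1}_{j^* p^*}.
\]
First I would rewrite $\xi^{-1}_{j^* p^*} = j^* \nu_{p^*}$, which follows from the triangle identity for $j^* \dashv j_*$ together with the invertibility of $\xi$; after this all three factors on the right are $j^*$ applied to maps in $\calE$, so the right-hand side equals $j^*$ applied to $p^! (p_* \nu_{p^*})^{-1} \circ \eta_{j_* j^* p^*} \circ \nu_{p^*}$.

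The one genuinely substantive step is naturality of $\eta : 1_\calE \to p^! p_*$ at the $2$-cell $\nu_{p^*} : p^* \to j_* j^* p^*$, which gives $\eta_{j_* j^* p^*} \circ \nu_{p^*} = (p^! p_* \nu_{p^*}) \circ \eta_{p^*}$; substituting this and cancelling $p^!(p_* \nu_{p^*})^{-1}$ against $p^!(p_* \nu_{p^*})$ leaves exactly $\eta_{p^*}$, so the displayed equality holds after applying $j^*$ and hence $\ophi = j^* \phi$. I expect the only real difficulty to be the $2$-categorical bookkeeping --- keeping the whiskerings straight and verifying that each inverted $2$-cell is genuinely invertible, which is exactly where the hypothesis that $j$ is above the centre is used; the mathematical content itself is the single naturality square above.
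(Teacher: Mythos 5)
Your proof is correct and is essentially the paper's own argument reflected across the commuting square that defines $\phi$: the paper verifies $\ophi = j^*\phi$ using the counit-side composite $\beta_{p^!}\circ p^*\varepsilon^{-1}$ together with the explicit formulae for $\overline{\varepsilon}$ and for the counit $\overline{\beta}$ of the composite adjunction, whereas you use the unit-side composite $p^!\alpha^{-1}\circ\eta_{p^*}$ together with the formulae for $\oeta$ and $\bar\alpha$, reducing everything to a single naturality square for $\eta$ against $\nu_{p^*}$. The bookkeeping (the identity $\xi^{-1}_{j^*p^*}=j^*\nu_{p^*}$ from the triangle law, the cancellation of $p^!(p_*\nu_{p^*})^{\pm 1}$, and the invertibility of $\alpha$, $\varepsilon$, $\bar\alpha$, $\overline{\varepsilon}$ via Proposition~\ref{PropAboveCentre}) all checks out, so this is a sound, mirror-image version of the same computation.
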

\begin{proof}
Observe that in the commutative diagram below
$$\xymatrix{
j^* p^* \ar[rrd]_-{j^* \phi} \ar[rr]^-{j^* p^* \varepsilon^{-1}} && j^* p^* p_* p^! \ar[d]^-{j^* \beta_{p^!}} \ar[rr]^-{j^* p^* p_* \nu_{p^!}} && j^* p^* p_* j_* j^* p^! \ar[d]^-{j^* \beta_{j_* j^* p^!}} \\
    && j^* p^! \ar[rrd]_-{id} \ar[rr]^-{j^* \nu_{p^!}} && j^* j_* j^* p^! \ar[d]^-{\xi_{j^* p^!}} \\
 && && j^* p^!
}$$
the top composite equals ${f^* \overline{\varepsilon}^{-1}}$ by Proposition~\ref{PropAboveCentre}, while the composite on the right, as in Section~{6.1} of \cite{MarmolejoMenni2017}, is ${\overline{\beta}_{f^!}}$, so the top-right composite is ${\overline{\phi} : f^* \rightarrow f^!}$.
\end{proof}

It is convenient to slightly extend the terminology in \cite{Lawvere07} and say that a local geometric morphism ${q : \calQ \rightarrow \calS}$ is  a {\em quality type} if the canonical transformation ${\phi : q^* \rightarrow q^!}$ is an isomorphism. 
Such special adjunctions are also called {\em quintessential localizations} in \cite{Johnstone96}. Notice that, trivially,  quality types  satisfy the Nullstellensatz, so they are hyperconnected.

Fix a local geometric morphism  ${p : \calE \rightarrow \calS}$ and call its centre level~0 as in Section~\ref{SecLevelsAndDimensions}.

\begin{definition}\label{DefSubquality}{\em A {\em subquality} of ${p : \calE \rightarrow \calS}$ is a subtopos ${j : \calE_j \rightarrow \calE}$  above level~0  and such that the composite ${p j : \calE_j \rightarrow \calS}$ is a quality type.}
\end{definition}

Compare with the notion of {\em quality} introduced in \cite{Lawvere07}.
Roughly speaking, while quality is a (special kind of) functor  to a quality type,  a subquality is a (special kind of) functor from a quality type.

\begin{lemma}\label{LemForPropThmCharSubQ} Let ${\calE_j \rightarrow \calE}$ be a subtopos above level~0.
Then, ${\calE_j \rightarrow \calE}$ is a subquality  of $p$ if and only if ${j^* \phi : j^* p^* \rightarrow j^* p^!}$ is an isomorphism.
Hence, if $p$ satisfies the Nullstellensatz then, ${\calE_j \rightarrow \calE}$ is  a subquality  of $p$ if and only if ${\phi_A : p^* A \rightarrow p^! A}$ is $j$-dense for every $A$ in $\calS$.
\end{lemma}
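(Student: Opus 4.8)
The plan is to prove Lemma~\ref{LemForPropThmCharSubQ} in two parts, handling the two ``if and only if'' statements separately.

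\medskip

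\noindent\textbf{First equivalence.}
I would start from Lemma~\ref{LemQ1}, which tells us that the canonical transformation ${\ophi : f^* \to f^!}$ for the composite ${f = pj}$ coincides with ${j^* \phi : j^* p^* \to j^* p^!}$. By Definition~\ref{DefSubquality}, ${\calE_j \to \calE}$ is a subquality precisely when $f$ is a quality type, and by definition a quality type is exactly the condition that the canonical ${\ophi : f^* \to f^!}$ is an isomorphism. Chaining these together: ${\calE_j \to \calE}$ is a subquality iff $\ophi$ is iso iff ${j^* \phi}$ is iso. This first part is therefore essentially immediate from the material already assembled, requiring only that I cite Lemma~\ref{LemQ1} and unwind the definitions; there is no real obstacle here.

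\medskip

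\noindent\textbf{Second equivalence.}
For the ``hence'' clause, I assume in addition that $p$ satisfies the Nullstellensatz, i.e.\ that ${\phi : p^* \to p^!}$ is monic. I want to show that ${j^* \phi}$ being an isomorphism is equivalent to each component ${\phi_A : p^* A \to p^! A}$ being $j$-dense (for $A$ in $\calS$). The key is that $j^*$ is the inverse-image part of a subtopos, hence is the sheafification functor: a monomorphism ${m}$ in $\calE$ becomes an isomorphism under $j^*$ if and only if $m$ is $j$-dense (this is the standard characterization of dense monos as those inverted by sheafification). Since $\phi$ is a monomorphism by the Nullstellensatz assumption, each component ${\phi_A}$ is a mono, so ${j^* \phi_A}$ is iso iff ${\phi_A}$ is $j$-dense. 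As ${j^* \phi}$ being a natural isomorphism means exactly that every component ${j^* \phi_A}$ is iso, the claimed equivalence follows.

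\medskip

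\noindent\textbf{Main obstacle.}
The step I expect to need the most care is the dense-mono characterization in the second part: I must be sure that ``$j$-dense'' in the statement means precisely ``inverted by the associated sheaf functor $j^*$,'' and that this applies componentwise to the natural transformation $\phi$. The subtlety is whether the equivalence ``mono inverted by $j^*$ $\Leftrightarrow$ $j$-dense'' is used in the correct direction and whether the Nullstellensatz hypothesis is genuinely needed (it is, to guarantee $\phi$ is monic so that the dense-mono criterion even applies). Everything else reduces to citing Lemma~\ref{LemQ1}, the definition of quality type, and the definition of subquality.
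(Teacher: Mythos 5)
Your proposal is correct and follows essentially the same route as the paper, whose entire proof is ``Follows from Lemma~\ref{LemQ1}'': the first equivalence is exactly the identification $\overline{\phi} = j^*\phi$ from that lemma combined with the definitions of subquality and quality type, and the second is the standard fact that a monomorphism is $j$-dense if and only if it is inverted by $j^*$, applied componentwise to the monic $\phi$. You have merely made explicit the details the paper leaves to the reader.
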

\begin{proof}
Follows from Lemma~\ref{LemQ1}.
\end{proof}

A subquality of $p$ is said to be {\em essential} if it is so as a subtopos of $\calE$. 
In this case, the subquality is a level of $\calE$ and it is above the centre of  $\calE$.

\begin{definition}\label{DefLevelEpsilon} {\em
In case it exists, {\em level~$\epsilon$} (of $p$) is the largest essential subquality of $p$.}
\end{definition}

Intuitively,  level~$\epsilon$ is an `infinitesimal' dimension so it should not be  above level~1. 
More generally,  essential subqualities  should not be  way-above level $0$.
In the context  of Proposition~\ref{PropLawvereRoy} we can make this precise as follows.
Let $\Omega$ be the subobject classifier of $\calE$ and recall (Proposition~3 in  \cite{Lawvere07}) that if $p$ is a quality type and ${p_! \Omega = 1}$ then $\calS$ is degenerate. Intuitively, the condition ${p_! \Omega = 1}$  is a positive way of saying that $p$ is not a quality type.

\begin{proposition}\label{PropWayAboveZero} Let the local  ${p : \calE \rightarrow \calS}$ be essential and hyperconnected. Let  ${l : \calE_l \rightarrow \calE}$ be an essential subquality of $p$.  If ${p_! \Omega = 1}$  and $l$ is  way-above $0$ then $\calS$ is degenerate.
\end{proposition}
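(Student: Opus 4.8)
The plan is to reduce the statement to the quality type $f = pl : \calE_l \rightarrow \calS$ and then apply the result recalled just above the statement (Proposition~3 of \cite{Lawvere07}): a quality type $f$ with $f_! \Omega_{\calE_l} = 1$ forces $\calS$ to be degenerate, where $\Omega_{\calE_l}$ is the subobject classifier of $\calE_l$. Since $l$ is an essential subquality of $p$, the composite $f = pl$ is by definition a quality type, and since both $p$ and $l$ are essential the composite is essential as well, with $f_! \simeq p_! l_!$. Thus everything comes down to proving $f_! \Omega_{\calE_l} \cong 1$.

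First I would identify $\Omega_{\calE_l}$ inside $\calE$. Using the standard fact that the inverse image of a subtopos inclusion preserves the subobject classifier, we have $l^* \Omega \cong \Omega_{\calE_l}$, where $\Omega$ is the subobject classifier of $\calE$. (Concretely, $l_* \Omega_{\calE_l}$ is the object of closed subobjects and its inclusion into $\Omega$ is dense, hence inverted by the associated-sheaf functor $l^*$.) Consequently $f_! \Omega_{\calE_l} \cong p_! l_! (l^* \Omega)$.

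Next I would feed in the hypothesis that $l$ is way-above $0$. Because $l$ is a level above $0$, Proposition~\ref{PropLawvereRoy} applies and tells us that applying $p_!$ to the $l$-skeleton counit yields an isomorphism $p_!(l_!(l^* X)) \rightarrow p_! X$ for every $X$ in $\calE$. Taking $X = \Omega$ gives $p_! l_! (l^* \Omega) \cong p_! \Omega$, and the hypothesis $p_! \Omega = 1$ closes the computation: $f_! \Omega_{\calE_l} \cong p_! l_! (l^* \Omega) \cong p_! \Omega = 1$. Proposition~3 of \cite{Lawvere07}, applied to the quality type $f$, then gives that $\calS$ is degenerate.

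The only step that is not purely formal is the identification $l^* \Omega \cong \Omega_{\calE_l}$; the rest is an assembly of the definition of subquality, the composition formula $f_! \simeq p_! l_!$, and the way-above criterion of Proposition~\ref{PropLawvereRoy} evaluated at the single object $X = \Omega$. I therefore expect the main (and essentially only) obstacle to be making the preservation of the subobject classifier by $l^*$ precise, which is why I would quote it as a known topos-theoretic fact rather than reprove it here.
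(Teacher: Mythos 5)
Your overall strategy is sound and close to the paper's: reduce to the quality type ${f = pl}$, evaluate the way-above criterion of Proposition~\ref{PropLawvereRoy} at ${X = \Omega}$, and conclude via Proposition~3 of \cite{Lawvere07} (the paper finishes slightly differently, converting ${p_!l_!}$ to ${p_*l_*}$ via the quality-type isomorphism ${f_! \cong f_*}$ and then using hyperconnectedness of $f$ to identify ${f_*\Omega_{\calE_l}}$ with the subobject classifier of $\calS$, but that is an inessential variation). The genuine gap is precisely the step you single out as the only non-formal one: the claimed identification ${l^*\Omega \cong \Omega_{\calE_l}}$. This is \emph{not} a standard fact; inverse images of subtopos inclusions do not preserve the subobject classifier, and the inclusion of ${l_*\Omega_{\calE_l}}$ (the object of $l$-closed subobjects) into $\Omega$ is not dense in general. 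To see the failure in exactly the kind of situation arising in this paper, take a full subcategory ${\calD \rightarrow \calC}$ inducing an essential subtopos ${\Psh{\calD} \rightarrow \Psh{\calC}}$: here ${l^*}$ is restriction, so ${(l^*\Omega)(D)}$ is the set of sieves on $D$ computed in $\calC$, whereas ${\Omega_{\Psh{\calD}}(D)}$ is the set of sieves on $D$ computed in $\calD$. Already for $\calC$ the arrow category ${\{A \rightarrow B\}}$ and ${\calD = \{B\}}$ (which contains the terminal object $B$, so the subtopos is above the centre, and indeed is an essential subquality) one gets ${(l^*\Omega)(B)}$ with three elements against ${\Omega_{\Psh{\calD}}(B) = 2}$. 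The same discrepancy is to be expected for the inclusions ${\calC_! \rightarrow \calC}$ that realize level~$\epsilon$ in the main examples.

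The repair is to replace the false isomorphism by the correct, weaker relation, which is all the argument needs: the topology ${j : \Omega \rightarrow \Omega}$ exhibits ${l_*\Omega_{\calE_l}}$ as a \emph{retract} of $\Omega$ in $\calE$, hence ${\Omega_{\calE_l} \cong l^*(l_*\Omega_{\calE_l})}$ is a retract of ${l^*\Omega}$. Applying ${f_! \cong p_!l_!}$, your computation then shows that ${f_!\Omega_{\calE_l}}$ is a retract of ${p_!(l_!(l^*\Omega)) \cong p_!\Omega = 1}$, and a retract of the terminal object is terminal, so ${f_!\Omega_{\calE_l} = 1}$ and Proposition~3 of \cite{Lawvere07} applies. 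With this one correction your proof closes and coincides in substance with the paper's, which performs exactly this retract manoeuvre.
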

\begin{proof}
Let ${\rho : l_! (l^* \Omega) \rightarrow \Omega}$ be the $l$-skeleton of $\Omega$.
As $l$ is way-above level~0 by hypothesis, Proposition~\ref{PropLawvereRoy} implies that ${p_! \rho : p_! (l_! (l^* \Omega)) \rightarrow p_! \Omega = 1}$ is an isomorphism. 
As  ${p l : \calE_l \rightarrow \calS}$ is a quality type, ${p_! (l_! (l^* \Omega)) \cong p_* (l_* (l^* \Omega))}$.
So ${ p_* (l_* (l^* \Omega)) = 1}$.

 Let ${\Omega_{l}}$ be the subobject classifier in $\calE_l$.
 It is well-known that ${l_* \Omega_{l}}$ is a retract of $\Omega$ in $\calE$.
 So ${\Omega_{l} \cong l^* (l_* \Omega_{l})}$ is a retract of ${l^* \Omega}$,
and then ${p_* (l_*  \Omega_{l})}$ is a retract of ${p_* (l_* (l^* \Omega)) = 1}$.
That is, ${p_* (l_*  \Omega_{l}) = 1}$.
As ${p l : \calE_l \rightarrow \calS}$  is hyperconnected,  ${p_* l_* : \calE_l \rightarrow \calS}$ preserves the subobject classifier. Altogether, the subobject classifier of $\calS$ is terminal.
\end{proof}

\begin{corollary} Let ${p : \calE \rightarrow \calS}$ be essential and hyperconnected. Assume that ${p_! \Omega = 1}$ and that  level~$\epsilon$ of $p$ exists. If  $\epsilon$ is way-above  $0$ then $\calE$ is degenerate. 
\end{corollary}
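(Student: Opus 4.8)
The plan is to obtain the conclusion directly from Proposition~\ref{PropWayAboveZero}, which already does the substantive work. First I would observe that, by Definition~\ref{DefLevelEpsilon}, level~$\epsilon$ is (in particular) an essential subquality of $p$, so it is legitimate to instantiate Proposition~\ref{PropWayAboveZero} with $l = \epsilon$. The hypotheses of that proposition --- that $p$ is essential and hyperconnected, that $p_! \Omega = 1$, and that the chosen essential subquality is way-above~$0$ --- are exactly the standing assumptions of the corollary once we take $l = \epsilon$. Hence the proposition yields that the base $\calS$ is degenerate.

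It then remains only to transfer degeneracy of $\calS$ to $\calE$. The key point is that the inverse image functor ${p^* : \calS \rightarrow \calE}$ preserves both the terminal and the initial object: it preserves finite limits by the definition of a geometric morphism, and it preserves all colimits (in particular the empty one) as a left adjoint. Since $\calS$ being degenerate means ${0 \cong 1}$ in $\calS$, applying $p^*$ gives ${0 \cong p^* 0 \cong p^* 1 \cong 1}$ in $\calE$, so $\calE$ is degenerate as well. Note that this last step needs neither locality nor essentialness of $p$; it is the trivial remark that a geometric morphism cannot have degenerate codomain and non-degenerate domain.

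I do not expect any real obstacle here: the corollary is an immediate packaging of Proposition~\ref{PropWayAboveZero} with the observation just made. The only point that deserves an explicit word is that level~$\epsilon$, being by definition the \emph{largest} essential subquality, is in particular an essential subquality and hence eligible as the $l$ of the proposition; this is immediate from Definition~\ref{DefLevelEpsilon}. One could also phrase the corollary as asserting $\calS$ degenerate (as in the proposition) and then promote to $\calE$, but the $\calE$-version is arguably the cleaner statement since $\calE$ is the ambient topos of Being under study.
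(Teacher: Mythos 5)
Your proposal is correct and matches the paper's (implicit) argument: the corollary is stated without proof precisely because it is the instantiation of Proposition~\ref{PropWayAboveZero} at $l=\epsilon$, which is legitimate since level~$\epsilon$ is by definition an essential subquality. Your extra step transferring degeneracy from $\calS$ to $\calE$ via $p^*$ preserving $0$ and $1$ is the right (and only) additional observation needed.
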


\section{Level~$\epsilon$ in presheaf toposes}
\label{SecPresheaf}

Consider a small category $\calC$ with terminal object so that the canonical geometric morphism ${p : \Psh{\calC}\rightarrow \Set}$ is local.
Without loss of generality we may assume that idempotents split in $\calC$.

\begin{corollary}\label{CorLargestEssentialSubQTNew}
If ${\calD \rightarrow \calC}$ is a full subcategory closed under splitting of idempotents then, the essential subtopos ${\Psh{\calD} \rightarrow \Psh{\calC}}$ is a subquality of  $p$ if and only if ${\calD \rightarrow \calC}$ contains the terminal object and every object of $\calD$ has a unique point.
\end{corollary}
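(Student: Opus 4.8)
The plan is to split the biconditional along the two clauses in Definition~\ref{DefSubquality}: being a subquality means, first, being above level~0, and, second, that the composite with $p$ is a quality type. Because $\calD$ is closed under splitting of idempotents, idempotents split in $\calD$, so Lemma~\ref{LemForCorLargestEssSubQ} applies on the nose and delivers the first clause: the essential subtopos ${\Psh{\calD} \rightarrow \Psh{\calC}}$ is above the centre of $p$ if and only if ${\calD \rightarrow \calC}$ contains the terminal object. Since any subquality is above level~0 by definition, this already shows that containment of the terminal object is necessary, and it lets me assume it from now on; moreover, as in the proof of Lemma~\ref{LemForCorLargestEssSubQ}, containment forces ${1_{\calD} = 1_{\calC}}$, so that points computed in $\calD$ agree with points computed in $\calC$.

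Granting that ${\Psh{\calD} \rightarrow \Psh{\calC}}$ is above level~0, I would invoke Lemma~\ref{LemForPropThmCharSubQ}, which reduces the second clause to deciding whether ${j^* \phi : j^* p^* \rightarrow j^* p^!}$ is an isomorphism, where $j$ denotes the inclusion. To evaluate this I recall the concrete description of the canonical local ${p : \Psh{\calC} \rightarrow \Set}$: the inverse image $p^* A$ is the presheaf constant at $A$, and $p_* X = X(1_\calC)$, so that by Yoneda and the adjunction ${p_* \dashv p^!}$ one obtains ${(p^! A)(C) \cong \Set(p_*(\yy C), A) = A^{\calC(1_\calC, C)}}$, the set of functions from the points of $C$ to $A$. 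Under these identifications ${\phi_A : p^* A \rightarrow p^! A}$ is, at each object $C$, the diagonal ${A \rightarrow A^{\calC(1_\calC, C)}}$ sending $a$ to the constant function at $a$; this can be read off from the defining square for $\phi$, or from naturality of $\phi_A$ against the points ${1_\calC \rightarrow C}$. Since $j^*$ is merely restriction of presheaves to the objects of $\calD$, the transformation $(j^* \phi)_A$ is, at each $D$ in $\calD$, the same diagonal ${A \rightarrow A^{\calC(1_\calC, D)}}$.

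The argument then closes with a purely set-theoretic observation: for a fixed set $S$ the diagonal ${A \rightarrow A^S}$ is a bijection for every set $A$ if and only if $S$ is a singleton, the cases ${S = \emptyset}$ and ${|S| \geq 2}$ both failing already when $A$ is a two-element set. Taking ${S = \calC(1_\calC, D)}$, which by the first paragraph is the set of points of $D$, this says exactly that $j^* \phi$ is an isomorphism precisely when every object of $\calD$ has a unique point. Combining the two clauses yields the stated equivalence.

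I expect the main obstacle to be the middle paragraph, specifically the honest verification that the canonical ${\phi : p^* \rightarrow p^!}$ really coincides with the diagonal under the chosen identifications of $p^*$ and $p^!$; this should be traced through the composite defining $\phi$ rather than simply guessed, and it is where the identification ${1_{\calD} = 1_{\calC}}$ is quietly used so that the points agree. The remaining steps are formal applications of Lemmas~\ref{LemForCorLargestEssSubQ} and~\ref{LemForPropThmCharSubQ} together with the elementary fact about diagonals.
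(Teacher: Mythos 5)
Your proof is correct, and it diverges from the paper's in its second half. The first half is identical: both you and the paper dispose of the ``above level~0'' clause by citing Lemma~\ref{LemForCorLargestEssSubQ}, which is exactly the right tool. For the ``quality type'' clause the paper simply outsources the work, citing Proposition~4.5 of \cite{Menni2014a} for the statement that the restriction ${\Psh{\calD} \rightarrow \Set}$ is a quality type if and only if every object of $\calD$ has a unique point. You instead stay inside the paper: you invoke Lemma~\ref{LemForPropThmCharSubQ} to reduce the question to invertibility of ${j^*\phi}$, identify ${(p^!A)(C) \cong A^{\calC(1,C)}}$ and $\phi_A$ as the diagonal, use that $j^*$ is restriction along the full inclusion (so only the components at objects of $\calD$ matter, where points in $\calD$ and in $\calC$ agree once the terminal object is shared), and finish with the elementary observation that the diagonal ${A \rightarrow A^S}$ is bijective for all $A$ exactly when $S$ is a singleton. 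This is a sound and essentially self-contained argument; the concrete description of $\phi$ you rely on is the same one the paper itself states (without proof) in the proof of Proposition~\ref{PropSubQualitiesOfPresheaf}, so your route effectively inlines the content of the external citation at the cost of a short computation, while the paper's route is shorter but less self-contained. Your closing caveat is well placed: the only step that genuinely needs tracing is that the composite defining $\phi$ really is the diagonal, and that is a routine unwinding of the counit of ${p_* \dashv p^!}$ and the unit of ${p^* \dashv p_*}$ under the identifications ${p_*X = X1}$ and ${(p^!A)(C) = A^{\calC(1,C)}}$.
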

\begin{proof}
It follows from Lemma~\ref{LemForCorLargestEssSubQ} above and Proposition~{4.5} in \cite{Menni2014a} which implies that the restriction ${\Psh{\calD} \rightarrow \Set}$ is a quality type if and only if every object in $\calD$ has a unique point.
\end{proof}

Let ${\calC_{!} \rightarrow \calC}$  be the full subcategory of all objects in $\calC$ that have exactly one point. For later reference we emphasize the following consequence of 
Corollary~\ref{CorLargestEssentialSubQTNew}:

\begin{lemma}\label{LemShrieck} The subcategory ${\calC_{!} \rightarrow \calC}$ is the largest full subcategory ${\calD\rightarrow \calC}$ of $\calC$ such that ${\Psh{\calD} \rightarrow \Psh{\calC}}$ is an essential subquality.
\end{lemma}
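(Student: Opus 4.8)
The plan is to establish the two halves of an extremality statement: first that $\Psh{\calC_!} \to \Psh{\calC}$ really is an essential subquality, and then that any full subcategory $\calD \to \calC$ for which $\Psh{\calD} \to \Psh{\calC}$ is an essential subquality must be contained in $\calC_!$. Both halves will be reductions to Corollary~\ref{CorLargestEssentialSubQTNew}, whose hypotheses concern full subcategories closed under splitting of idempotents.

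For the first half I would verify that $\calC_!$ satisfies the three conditions demanded by Corollary~\ref{CorLargestEssentialSubQTNew}. The terminal object lies in $\calC_!$ because its only point is the identity, and every object of $\calC_!$ has a unique point by definition. The one point needing a short argument is that $\calC_!$ is closed under splitting of idempotents: if $D$ has a unique point and $D'$ is a retract of $D$ with section $s$ and retraction $r$, then $r$ carries the point of $D$ to a point of $D'$, while any two points $x_1', x_2'$ of $D'$ satisfy $s x_1' = s x_2'$ (the unique point of $D$) and hence $x_1' = r s x_1' = r s x_2' = x_2'$; so $D'$ again has a unique point. With these three conditions in hand Corollary~\ref{CorLargestEssentialSubQTNew} yields at once that $\Psh{\calC_!} \to \Psh{\calC}$ is an essential subquality.

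For the second half, let $\calD \to \calC$ be full with $\Psh{\calD} \to \Psh{\calC}$ an essential subquality; I must show every object of $\calD$ has a unique point. The difficulty is that $\calD$ need not be closed under splitting of idempotents, so Corollary~\ref{CorLargestEssentialSubQTNew} does not apply verbatim. I would remedy this by passing to the closure $\overline{\calD} \to \calC$ of $\calD$ under retracts, which exists because idempotents split in $\calC$ and which realises the Karoubi envelope of $\calD$ inside $\calC$. Since Cauchy completion does not change the presheaf category, and the equivalence $\Psh{\overline{\calD}} \simeq \Psh{\calD}$ is compatible with restriction from $\Psh{\calC}$, the subcategories $\calD$ and $\overline{\calD}$ induce the very same subtopos of $\Psh{\calC}$; in particular $\Psh{\overline{\calD}} \to \Psh{\calC}$ is again an essential subquality. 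Now $\overline{\calD}$ is closed under splitting of idempotents, so Corollary~\ref{CorLargestEssentialSubQTNew} applies and tells us that every object of $\overline{\calD}$ has a unique point. As $\calD \subseteq \overline{\calD}$, every object of $\calD$ has a unique point, i.e. $\calD \subseteq \calC_!$. Together with the first half this exhibits $\calC_!$ as the largest such subcategory.

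The step I expect to be the main obstacle is the justification that $\calD$ and its retract-closure $\overline{\calD}$ determine the same subtopos of $\Psh{\calC}$ — the Morita (Cauchy) invariance of the induced essential subtopos — since it is precisely this that licenses applying Corollary~\ref{CorLargestEssentialSubQTNew} to the possibly non-idempotent-complete $\calD$ we start with. Everything else is a routine check of that corollary's hypotheses.
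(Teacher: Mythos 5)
Your proof is correct and follows the same route as the paper, which simply records this lemma as a consequence of Corollary~\ref{CorLargestEssentialSubQTNew}. The only content you add beyond the paper's (unwritten) argument is the careful handling of a possibly non-idempotent-complete $\calD$ via its retract closure $\overline{\calD}$ in $\calC$ and the Morita invariance $\Psh{\overline{\calD}}\simeq\Psh{\calD}$ over $\Psh{\calC}$ --- a detail the paper leaves implicit but which is exactly right, since the corollary's converse direction genuinely needs idempotents to split in the subcategory.
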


We  discuss below some related sufficient conditions  for this subquality to be level~$\epsilon$. In order to do so recall (C2.2.18 in \cite{elephant}) that an object $B$ in a site  ${(\calB, J)}$  is {\em $J$-irreducible} if every $J$-covering sieve on $B$ is the maximal sieve.
The Grothendieck coverage $J$ is said to be {\em rigid} if, for every object $B$ in $\calB$, the family of all morphisms from $J$-irreducible objects to $B$ generates a $J$-covering sieve. If $J$ is rigid and ${\calI \rightarrow \calB}$ is the full subcategory of $J$-irreducible objects then the Comparison Lemma implies that restriction along the inclusion ${\calI \rightarrow \calB}$ restricts to an equivalence ${\Sh(\calB, J) \cong \Psh{\calI}}$.

\begin{proposition}\label{PropRigid} If every Grothendieck coverage on $\calC$ is rigid  then ${\Psh{\calC_{!}} \rightarrow \Psh{\calC}}$ is level~$\epsilon$ of the local ${\Psh{\calC} \rightarrow \Set}$.
\end{proposition}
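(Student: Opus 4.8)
The plan is to show that ${\Psh{\calC_{!}} \rightarrow \Psh{\calC}}$ is the \emph{largest} essential subquality of $p$, which is exactly what is required to be level~$\epsilon$ (Definition~\ref{DefLevelEpsilon}). By Lemma~\ref{LemShrieck} we already know that ${\Psh{\calC_{!}} \rightarrow \Psh{\calC}}$ is an essential subquality, so it is a level above level~0, and it only remains to prove that every essential subquality of $p$ lies below it in the order of levels of Corollary~\ref{estaeralaproposicion3.1}. The rigidity hypothesis will be used precisely to reduce an arbitrary essential subquality to one arising from a full subcategory, where Lemma~\ref{LemShrieck} can be applied.

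So I would let ${\calE_j \rightarrow \Psh{\calC}}$ be an arbitrary essential subquality. Being an essential subtopos (a level), it is of the form ${\Sh(\calC, J) \rightarrow \Psh{\calC}}$ for the Grothendieck topology $J$ associated, as recalled after Definition~\ref{DefIdempotentIdeal}, to some idempotent ideal of $\calC$. By hypothesis $J$ is rigid, so if ${\calI \rightarrow \calC}$ denotes the full subcategory of $J$-irreducible objects, the Comparison Lemma yields an equivalence ${\Sh(\calC, J) \cong \Psh{\calI}}$ induced by restriction along ${\calI \rightarrow \calC}$.

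The key step is then to identify the subtopos ${\Sh(\calC, J) \rightarrow \Psh{\calC}}$ with the essential subtopos ${\Psh{\calI} \rightarrow \Psh{\calC}}$ determined by the full subcategory ${\calI \rightarrow \calC}$. A subtopos of $\Psh{\calC}$ is determined by its inverse image, equivalently by its reflective subcategory of sheaves. The inverse image of ${\Sh(\calC, J) \rightarrow \Psh{\calC}}$ is sheafification, and composing it with the Comparison equivalence gives exactly restriction along ${\calI \rightarrow \calC}$; but this restriction is the inverse image of the essential morphism ${\Psh{\calI} \rightarrow \Psh{\calC}}$ induced by the full subcategory. Hence the two subtoposes coincide. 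If $\calI$ is not already closed under splitting of idempotents I would replace it by its closure under retracts inside $\calC$, which leaves both $\Psh{\calI}$ and the subtopos unchanged since $\calC$ is idempotent complete; so one may assume $\calI$ is closed under splitting of idempotents.

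Now ${\calE_j = \Sh(\calC, J) = \Psh{\calI} \rightarrow \Psh{\calC}}$ is an essential subquality realised by a full subcategory closed under splitting of idempotents, so the maximality asserted in Lemma~\ref{LemShrieck} forces ${\calI \subseteq \calC_{!}}$. Since a larger full subcategory induces a higher level, as the restriction $j^*$ to $\calI$ factors through the restriction to $\calC_{!}$, which is condition~(iii) of Corollary~\ref{estaeralaproposicion3.1}, we conclude that ${\calE_j = \Psh{\calI}}$ is below $\Psh{\calC_{!}}$. Thus $\Psh{\calC_{!}}$ dominates every essential subquality of $p$ and, being itself an essential subquality, is the largest one; that is, it is level~$\epsilon$. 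The main obstacle is the identification in the third paragraph: one must check that the Comparison equivalence transports the canonical sheaf inclusion to the canonical essential inclusion of the full subcategory of irreducibles, i.e. that sheafification agrees with restriction to $\calI$ up to that equivalence, which ultimately rests on the fact that on a $J$-irreducible object the only covering sieve is maximal.
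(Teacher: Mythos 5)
Your proposal is correct and follows essentially the same route as the paper: the paper's own proof is just the observation that rigidity forces every level of $\Psh{\calC}$ to be induced by a full subcategory of $\calC$, whence Lemma~\ref{LemShrieck} gives maximality. You have merely filled in the details the paper leaves implicit (the Kelly--Lawvere correspondence, the Comparison Lemma identification via the fact that irreducibles are only covered by the maximal sieve, and the closure under idempotent splitting), all of which check out.
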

\begin{proof}
If every Grothendieck coverage on $\calC$ is rigid  then the levels of  ${\Psh{\calC}}$ are all induced  by full subcategories of $\calC$. So the result follows from Lemma~\ref{LemShrieck}.
\end{proof}

At first glance, Proposition~\ref{PropRigid} may look difficult to apply so let us derive a simpler sufficient condition.

\begin{corollary}\label{CorFinite} If $\calC$ is finite then ${\Psh{\calC_{!}} \rightarrow \Psh{\calC}}$ is level~$\epsilon$ of the local ${\Psh{\calC} \rightarrow \Set}$.
\end{corollary}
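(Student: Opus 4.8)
The plan is to reduce the statement to Proposition~\ref{PropRigid}: it suffices to prove that when $\calC$ is finite, every Grothendieck coverage $J$ on $\calC$ is rigid, for then the hypothesis of that proposition is met and ${\Psh{\calC_{!}} \rightarrow \Psh{\calC}}$ is level~$\epsilon$. The first ingredient is that finite intersections of $J$-covering sieves are again $J$-covering; this is a standard consequence of stability and transitivity, since if $S, T \in JC$ then for each $f \in S$ one has $f^*(S\cap T) = f^* T \in J(\dom f)$, whence $S \cap T \in JC$ by transitivity. As $\calC$ is finite there are only finitely many sieves on each object, so every $C$ possesses a smallest $J$-covering sieve $M_C$, and $C$ is $J$-irreducible exactly when $M_C$ is the maximal sieve, i.e. when $1_C \in M_C$.

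Write $R_B$ for the sieve on $B$ generated by all morphisms whose domain is $J$-irreducible; rigidity amounts to the assertion that $R_B \in JB$ for every $B$. I would prove this by well-founded induction, the basic mechanism being transitivity. If $B$ is $J$-irreducible then $1_B \in R_B$ and $R_B$ is maximal. If $B$ is not $J$-irreducible then $M_B$ is a proper cover, and to conclude $R_B \in JB$ it suffices to check that $f^* R_B$ covers $\dom f$ for every $f\colon D \to B$ in $M_B$; since any map factoring through a $J$-irreducible still factors through it after precomposition, one has $R_D \subseteq f^* R_B$, so it is enough that $R_D \in JD$. Thus the inductive step reduces coverage of $R_B$ to coverage of $R_D$ for the domains $D$ occurring in $M_B$. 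Because $1_B \notin M_B$, no $f \in M_B$ is a split epimorphism, and it is this fact that will force each such $D$ to be strictly smaller than $B$.

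The main obstacle is to organise this into a genuinely well-founded recursion, and here finiteness enters twice. First, order objects by the finite poset of strongly connected components under reachability: a domain $D$ of a map in $M_B$ lying in a strictly lower component is handled by the outer induction. The delicate case is $D$ lying in the same component as $B$, where a morphism $g\colon B \to D$ exists; then $fg$ is a non-invertible endomorphism of $B$ in $M_B$, and since the hom-sets of $\calC$ are finite a one-sided inverse is automatically two-sided, so some power of $fg$ is a non-identity idempotent $e \in M_B$. As idempotents split in $\calC$, this exhibits a proper retract of $B$ whose section lies in $M_B$, and the same finiteness shows that the proper-retract relation on isomorphism classes is well-founded, supplying the inner rank. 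The crux of the argument is precisely the bookkeeping that discharges every same-component domain of $M_B$ against a lexicographic rank built from component height and proper-retract length; this is the step I expect to require the most care. Once it is in place, rigidity follows, and Proposition~\ref{PropRigid} together with Lemma~\ref{LemShrieck} yields the corollary.
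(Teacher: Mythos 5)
Your overall strategy is the same as the paper's: reduce to Proposition~\ref{PropRigid} by showing that every Grothendieck coverage on a finite $\calC$ (with split idempotents) is rigid. The paper, however, simply cites this rigidity fact (C2.2.21 in \cite{elephant}), whereas you attempt to prove it from scratch, and the part you defer --- ``the bookkeeping that discharges every same-component domain of $M_B$'' --- is precisely where the difficulty lives. Your preparatory steps are fine (closure of covers under finite intersection, existence of the smallest cover $M_B$, the reduction via transitivity of $R_B \in JB$ to $R_D \in JD$ for the domains $D$ of morphisms in $M_B$, and the observation that a non-invertible endomorphism in $M_B$ yields a non-identity idempotent in $M_B$ and hence, by splitting, a proper retract of $B$ whose section lies in $M_B$). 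But the induction you propose does not close.

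Concretely: the lexicographic rank (strongly-connected-component height, then proper-retract length) need not decrease when passing from $B$ to a domain $D$ of a morphism $f \in M_B$. First, in the situations this corollary is actually applied to, $\calC$ has a terminal object and typically every object has a point, so the reachability preorder has a single strongly connected component and your outer rank carries no information. Second, and more seriously, for $D$ in the same component as $B$ your idempotent trick produces a proper retract \emph{of $B$}, which says nothing about $D$: nothing prevents $M_B$ from being generated by a morphism $D \to B$ where $B$ is itself a proper retract of $D$, in which case $D$ has strictly \emph{larger} retract-rank than $B$ and your inner rank increases. (Your own reduction insists on handling \emph{every} $f \in M_B$, including such $f$ and including non-invertible endomorphisms of $B$, for which $D = B$ and no rank decreases at all.) The standard arguments instead show directly that every morphism of $M_B$ \emph{factors through} a $J$-irreducible object --- using the factorization preorder on the finite set $M_B$ together with the idempotent-splitting trick applied to the \emph{domains} of suitably chosen (e.g.\ $\preceq$-maximal) elements --- rather than recursing on $R_D$ for arbitrary domains $D$. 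As written, your proof has a genuine gap at its central step; either supply a correct well-founded scheme along those lines or, as the paper does, invoke C2.2.21 of \cite{elephant} directly.
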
 
\begin{proof}
If $\calC$ is finite then every coverage of $\calC$ is rigid (see C2.2.21 in \cite{elephant}).
\end{proof}

In particular, graphic toposes \cite{Lawvere1989taco} have a level~$\epsilon$ of this simple kind.
On the other hand, it is worth mentioning that Proposition~\ref{PropRigid} also applies to non-finite examples such as the sites studied in \cite{KennetEtAl2011}. For instance,   ${\Delta}$ or the category of non-empty finite sets. It follows that   simplicial sets and the classifier of non-trivial Boolean algebras have a level~$\epsilon$. In this cases, though, level~$\epsilon$ coincides with level~0 because, in the respective sites, the terminal object is the only object with exactly one point.

In order to discuss a simple example where $\epsilon$ does not coincide with $0$ we borrow the 4-element graphic monoid  discussed in p.~62  of \cite{Lawvere1989taco}.  Consider first, as an auxiliary step, the pre-cohesive topos ${ p : \Psh{\Delta_1} \rightarrow \Set}$ of reflexive graphs. 
Let $G$ be the graph with two nodes and a non-trivial loop displayed below
$$\xymatrix{
\ar@(ld, lu)[] \bot &  \top 
}$$
and let  $M$ be the monoid of endomorphisms of $G$ that are either constant or don't collapse the non-trivial loop.
There are four such maps, two constants, the identity, and the unique map $\alpha$ that sends $\top$ to $\bot$ but does not collapse the loop. 
If we split the constants, we obtain the (non-full)  subcategory of ${\Psh{\Delta_1}}$ pictured below
$$\xymatrix{
1 \ar[r]<-1ex>_-{{\bot}} \ar[r]<+1ex>^-{{\top}} & G \ar@(ru,rd)^-{\alpha}
}$$
where $1$ is terminal, ${\alpha\alpha = \alpha}$ and ${\alpha \bot = \bot = \alpha {\top}}$. (Notice that, we are not drawing constant endos or the unique map to the terminal.) It is then clear that we may describe an object of ${\Psh{M}}$ as a reflexive graph equipped with an idempotent function on its edges that sends each edge $x$ to a loop ${x \cdot \alpha}$ on the  domain of $x$, preserving the identity loops. As suggested in \cite{Lawvere1989taco} we call ${x\cdot \alpha}$ the {\em preparation to do $x$}.
Alternatively, as a graph equipped with a distinguished subset of loops containing the trivial ones, and a domain-preserving retraction for the inclusion of distinguished loops into edges.

To calculate level~$\epsilon$ of the pre-cohesive ${\Psh{M}}$ we split all idempotents. Let ${s : D \rightarrow G}$ be the split monic that results from splitting $\alpha$ in $\Psh{\Delta_1}$ and let ${r : G \rightarrow D}$ be its retraction. We may picture the idempotent-splitting $N$ of $M$ as the (non-full)  subcategory of ${\Psh{\Delta_1}}$  suggested below
$$\xymatrix{
 & D \ar[d]<-1ex>_-s \\
1 \ar@(u,l)[ru]^-{\ddagger} \ar[r]<-1ex>_-{{\bot}} \ar[r]<+1ex>^-{{\top}} & G \ar@(ru,rd)^-{\alpha} \ar[u]<-1ex>_-r 
}$$
with ${r \bot = \ddagger = r\top : 1 \rightarrow D}$ and ${s \ddagger = \bot : 1 \rightarrow G}$.

It is then clear that the full subcategory ${N_{!} \rightarrow N}$ is that determined by $D$ and $1$ and, by Corollary~\ref{CorFinite}, 
${\Psh{N_{!}} \rightarrow \Psh{N} \cong \Psh{M}}$ is level~$\epsilon$ of the pre-cohesive ${\Psh{M} \rightarrow \Set}$.
 It is then possible to check that the $\epsilon$-skeletal objects in $\Psh{M}$ are those that consist only of distinguished loops. On the other hand, the sheaves for level~$\epsilon$ are the objects  such that for each distinguished loop $d$ and each node $n$ there exists a unique edge to $n$ with preparation $d$.

The topos ${\Psh{M}}$ does not have many levels so it is easy to see that the Aufhebung of level~$\epsilon$ coincides  with the top level, that is, the whole of ${\Psh{M}}$. Similarly, level~1 must also be the top level in this case.

Consider again a small category $\calC$ with a terminal object and such that every object has a point, so that the canonical  ${p : \Psh{\calC} \rightarrow \Set}$ is pre-cohesive.

\begin{definition}\label{DefPseudoConstant}{\em
A morphism ${f:D\to C}$ in $\calC$ is a {\em pseudo-constant} if for any two points ${a, b: 1\to D}$ in $\calC$, ${f a = f  b : 1 \rightarrow C}$. }
\end{definition}

In other words, the pseudo-constants are those morphisms that are constant on points. We  think of a pseudo-constant as a morphism that factors through an object that has exactly one point. Notice that if $D$ has exactly one point then every map ${D \rightarrow C}$ is a pseudo-constant.

\begin{proposition}\label{PropSubQualitiesOfPresheaf}  If $J$ is a Grothendieck topology on $\calC$  such that the subtopos ${\Sh(\calC, J) \rightarrow \Psh{\calC}}$ is above the centre of $p$ then the following are equivalent:
\begin{enumerate}
\item The subtopos ${\Sh(\calC, J) \rightarrow \Psh{\calC}}$ is a subquality of $p$.
\item For every $C$ in $\calC$, the sieve of all the pseudo-constants with codomain $C$ is $J$-covering.
\item For every $C$ in $\calC$, ${J C}$ contains a sieve of pseudo-constants.
\end{enumerate}
\end{proposition}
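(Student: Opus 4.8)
The plan is to reduce condition 1 to a $J$-density statement via the machinery already set up, and then to match the relevant density sieves with pseudo-constants. Since the canonical $p \colon \Psh{\calC}\to\Set$ is pre-cohesive it is hyperconnected, hence satisfies the Nullstellensatz; as $\Sh(\calC,J)\to\Psh{\calC}$ is assumed above the centre, Lemma~\ref{LemForPropThmCharSubQ} applies and tells us that condition~1 holds if and only if $\phi_A\colon p^*A\to p^!A$ is $J$-dense for every set $A$. The whole problem is thus to translate this density condition into condition~2.

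First I would record the concrete form of the functors. Since $\calC$ has a terminal object $1$, the global-sections functor is evaluation $p_*X=X1$, the inverse image $p^*A$ is the constant presheaf at $A$, and the right adjoint is $p^!A = \big(C\mapsto \Set(\calC(1,C),A)\big)$, the presheaf of functions on the points of $C$, with restriction along $g\colon D\to C$ given by $(p^!A)(g)(\psi)(d)=\psi(gd)$ for a point $d$ of $D$. A short naturality computation (pinning $\phi_A$ down from its value $p_*\phi_A$, essentially $\mathrm{id}_A$, via naturality at the points $x\colon 1\to C$) identifies $\phi_{A,C}\colon A\to\Set(\calC(1,C),A)$ as the map sending $a$ to the constant function at $a$; so the image of $\phi_A$ at $C$ is exactly the set of constant functions on the points of $C$.

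Next I would spell out $J$-density of $\phi_A$: it holds if and only if for every $C$ and every $\psi\in\Set(\calC(1,C),A)$ the sieve $S_\psi=\{g\colon D\to C \mid d\mapsto\psi(gd)\ \text{is constant on the points of}\ D\}$ is $J$-covering. The key observation is that $g$ is a pseudo-constant precisely when $d\mapsto\psi(gd)$ is constant for \emph{every} $\psi$ and \emph{every} $A$; in particular the sieve $P_C$ of all pseudo-constants with codomain $C$ (these are closed under precomposition, so genuinely form a sieve) satisfies $P_C\subseteq S_\psi$ for all $A$ and all $\psi$. This gives $2\Rightarrow1$ at once: if $P_C$ is $J$-covering then each larger $S_\psi$ is $J$-covering, so every $\phi_A$ is $J$-dense. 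For $1\Rightarrow2$ the neat trick is to instantiate $A=\calC(1,C)$ (a set, as $\calC$ is small) and $\psi=\mathrm{id}$; then $d\mapsto\psi(gd)$ is just $d\mapsto gd$, which is constant on points of $D$ exactly when $g$ is a pseudo-constant, so $S_{\mathrm{id}}=P_C$, and $J$-density forces $P_C\in JC$.

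Finally $2\Leftrightarrow3$ is formal: $P_C$ is itself a sieve of pseudo-constants, so $2\Rightarrow3$ is immediate, and conversely any $J$-covering sieve consisting of pseudo-constants is contained in $P_C$, whence $P_C\in JC$ by upward closure of $JC$. The only genuinely delicate step is the explicit identification of $\phi_A$ with the constant-function inclusion together with the bookkeeping of the restriction maps of $p^!A$; once those are fixed, the containment $P_C\subseteq S_\psi$ and the choice $A=\calC(1,C),\ \psi=\mathrm{id}$ that recovers $P_C$ exactly make all the implications fall out.
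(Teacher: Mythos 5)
Your proposal is correct and follows essentially the same route as the paper's proof: both reduce condition~1 to $J$-density (local surjectivity) of $\phi_A$ via Lemma~\ref{LemForPropThmCharSubQ}, identify $\phi_{A,C}$ as the inclusion of constant functions into $A^{\calC(1,C)}$, and extract the sieve of pseudo-constants by instantiating $A=\calC(1,C)$ with the identity element. The only difference is organizational (the paper cycles $1\Rightarrow3\Rightarrow2\Rightarrow1$ while you prove $1\Leftrightarrow2$ directly), which is immaterial.
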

\begin{proof}
First consider the canonical ${\phi : p^* \rightarrow p^! }$ in the present context. For $A$ in $\Set$ and $C$ in $\calC$, the function  ${\phi_{A, C} : A = (p^* A) C \rightarrow (p^! A) C = A^{\calC(1, C)}}$ sends ${a \in A}$ to the constant function in ${A^{\calC(1, C)}}$ that collapses everything to $a$. 

If the first item holds then, for every $A$ in $\Sets$, ${\phi_A : p^* A \rightarrow p^! A}$ is $J$-dense by Lemma~\ref{LemForPropThmCharSubQ}. So $\phi_A$ must be locally surjective (w.r.t. $J$) by Corollary~{III.7.6} in \cite{maclane2}. 
In particular, ${\phi_{\calC(1, C)}}$ must be so.
Take the identity $id$ in codomain of ${\phi_{\calC(1, C), C} : \calC(1, C) \rightarrow  \calC(1, C)^{\calC(1, C)}}$. 
Local surjectivity implies the existence of  a $J$-cover ${(f_i : C_i \rightarrow C \mid i \in I)}$ such that for every ${i \in I}$, ${id \cdot f_i = f_i (\_) \in \calC(1, C)^{\calC(1, C_i)}}$ is constant. In other words, each $f_i$ is a pseudo-constant.
So the third item holds. 
The third item trivially implies the second. 
If the second item holds then we can use the sieve mentioned there to prove that
${\phi_A : p^* A \rightarrow p^! A}$ is locally surjective.
\end{proof}

Notice  that  pseudo-constants in $\calC$ form and ideal in the sense of Definition~\ref{DefIdempotentIdeal}.

\begin{proposition}\label{PropEpsilonExists} If pseudo-constants in $\calC$ form an idempotent ideal then the pre-cohesive ${p : \Psh{\calC} \rightarrow \Set}$ has a level $\epsilon$ and it coincides with the largest subquality of $p$. 
\end{proposition}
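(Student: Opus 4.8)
The plan is to exhibit an explicit candidate for the largest subquality and then verify that it is essential, so that it qualifies as level~$\epsilon$. Write $\calI$ for the class of pseudo-constants, which is an ideal by the remark preceding the statement and is idempotent by hypothesis. First I would invoke the correspondence between levels and idempotent ideals (Theorem~{4.4} in \cite{KellyLawvere89}, recalled after Definition~\ref{DefIdempotentIdeal}) to produce an essential subtopos ${\ell : \Sh(\calC, J) \to \Psh{\calC}}$ whose associated topology $J$ is described by: a sieve $S$ on $C$ is $J$-covering if and only if it contains every pseudo-constant with codomain $C$. Writing $P_C$ for the sieve of all pseudo-constants into $C$ (this is a sieve, since pseudo-constants form a right ideal), one has ${S \in J C}$ precisely when ${P_C \subseteq S}$.

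Next I would check that ${\Sh(\calC, J)}$ is an essential subquality. Since every point ${1 \to C}$ is trivially a pseudo-constant, each $J$-covering sieve contains all points of $C$; by Lemma~\ref{LemGrothendieckAboveCodiscrete} this places ${\Sh(\calC, J)}$ above the centre of $p$. As $P_C$ is itself a sieve of pseudo-constants lying in ${J C}$, the third condition of Proposition~\ref{PropSubQualitiesOfPresheaf} holds, so ${\Sh(\calC, J)}$ is a subquality; being a level, it is essential.

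It then remains to prove maximality, using that every subtopos of ${\Psh{\calC}}$ is of the form ${\Sh(\calC, J')}$ for a Grothendieck topology $J'$. If ${\Sh(\calC, J')}$ is any subquality of $p$, then it is above the centre, so Proposition~\ref{PropSubQualitiesOfPresheaf} yields, for each $C$, a sieve of pseudo-constants in ${J' C}$; since covering sieves are closed upwards and any sieve of pseudo-constants is contained in $P_C$, this forces ${P_C \in J' C}$, that is ${J \subseteq J'}$. By the description of the ordering coming from Corollary~\ref{estaeralaproposicion3.1} (a smaller topology gives a subtopos that sits higher in the `above' order), ${\Sh(\calC, J)}$ is above ${\Sh(\calC, J')}$. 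Hence ${\Sh(\calC, J)}$ is above every subquality, so it is the largest subquality; being essential, it is in particular the largest essential subquality and therefore equals level~$\epsilon$ in the sense of Definition~\ref{DefLevelEpsilon}.

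The main obstacle is bookkeeping rather than conceptual: one must be careful about the direction of the ordering (more covering sieves yields a smaller subcategory of sheaves, which sits lower in the `above' order) and about the upgrade from ``${J'C}$ contains a sieve of pseudo-constants'' to ``${P_C \in J'C}$'', which relies on the upward closure of covering sieves together with the fact that $P_C$ is the maximal sieve of pseudo-constants into $C$.
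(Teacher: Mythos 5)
Your proposal is correct and follows essentially the same route as the paper's own proof: both take the Grothendieck topology $J$ associated (via the Kelly--Lawvere correspondence) to the idempotent ideal of pseudo-constants, place ${\Sh(\calC, J) \rightarrow \Psh{\calC}}$ above the centre via Lemma~\ref{LemGrothendieckAboveCodiscrete}, and deduce both the subquality property and maximality from Proposition~\ref{PropSubQualitiesOfPresheaf}. The only differences are cosmetic: you verify condition~2 rather than condition~3 of Lemma~\ref{LemGrothendieckAboveCodiscrete}, and you spell out the ordering bookkeeping (smaller topology lies higher in the ``above'' order, and ``contains a sieve of pseudo-constants'' upgrades to ``the full sieve of pseudo-constants is covering'') that the paper leaves implicit.
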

\begin{proof}
The Grothendieck topology $J$ on $\calC$ determined by the idempotent ideal of pseudo-constants is such that a sieve on $C$ is $J$-covering if and only if it contains all the pseudo-constants with codomain $C$. It follows that the terminal object is only covered by the identity so ${\Sh(\calC, J) \rightarrow \Psh{\calC}}$ is above the centre of $p$ by Lemma~\ref{LemGrothendieckAboveCodiscrete}. Proposition~\ref{PropSubQualitiesOfPresheaf} implies that the essential subtopos ${\Sh(\calC, J) \rightarrow \Psh{\calC}}$ is a subquality of $p$ and that  every topology $J'$ inducing a subquality of $p$ must satisfy ${J \subseteq J'}$. 
\end{proof}

In the case of reflexive graphs, simplicial sets, or the Gaeta topos determined by the theory of distributive lattices, the site satisfies that every pseudo-constant factors through a point  so, in these cases,  level $\epsilon$ exists and coincides with the centre.

\begin{definition}\label{DefEnoughLittleFigures}{\em 
We say that ${\calC}$ has {\em enough little figures} if for every pseudo-constant ${D \rightarrow C}$  there is a commutative diagram
$$\xymatrix{
D \ar[rd] \ar[r] & B \ar[d] \\
                    & C
}$$
such that ${B}$ has exactly one point. }
\end{definition}

The intuition behind the terminology is that a map ${B \rightarrow C}$ whose domain has exactly one point is to be thought of as a `little figure' of $C$, or a figure of $C$ with `little' domain.

\begin{corollary}\label{CorLargestSubqualityIsEssential} If $\calC$ has enough little figures then ${\Psh{\calC_!} \rightarrow \Psh{\calC}}$ is level~$\epsilon$ of the pre-cohesive ${p : \Psh{\calC} \rightarrow \Set}$ and it coincides with the largest subquality of $p$.
\end{corollary}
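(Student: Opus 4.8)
The plan is to connect the hypothesis ``enough little figures'' to the criterion for level~$\epsilon$ supplied by Proposition~\ref{PropEpsilonExists}, and simultaneously to the description of the largest essential subquality furnished by Lemma~\ref{LemShrieck}. The key observation to establish first is that, under the hypothesis, the ideal of pseudo-constants in $\calC$ is idempotent in the sense of Definition~\ref{DefIdempotentIdeal}. Indeed, given a pseudo-constant ${g : D \rightarrow C}$, the hypothesis supplies a factorization ${D \rightarrow B \rightarrow C}$ through an object $B$ with exactly one point. Since $B$ has exactly one point, every map out of $B$ is a pseudo-constant, and a fortiori the composite ${D \rightarrow B}$ is also a pseudo-constant (anything landing in a one-point object collapses all points trivially). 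Thus $g$ factors as a composite of two pseudo-constants, which is precisely idempotency of the ideal.

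Once idempotency is in hand, Proposition~\ref{PropEpsilonExists} applies directly to yield that the pre-cohesive ${p : \Psh{\calC} \rightarrow \Set}$ has a level~$\epsilon$, realized as ${\Sh(\calC, J) \rightarrow \Psh{\calC}}$ where $J$ is the topology whose covering sieves on $C$ are exactly those containing all pseudo-constants with codomain $C$, and moreover that this level~$\epsilon$ coincides with the largest subquality of $p$. It therefore remains only to identify this subtopos with ${\Psh{\calC_!} \rightarrow \Psh{\calC}}$.

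The identification is where the hypothesis does its real work. The plan is to show that the topology $J$ is rigid, with the $J$-irreducible objects being precisely the objects of $\calC_!$, and then invoke the Comparison Lemma (as recalled in the paragraph preceding Proposition~\ref{PropRigid}) to conclude ${\Sh(\calC, J) \cong \Psh{\calC_!}}$. An object $B$ with exactly one point is $J$-irreducible: any $J$-cover must contain all pseudo-constants into $B$, but since $B$ has a unique point the identity is forced to be the only way to cover (the generating pseudo-constants cannot omit the identity without failing to cover the unique point). Conversely, for an arbitrary $C$, the hypothesis of enough little figures guarantees that every pseudo-constant into $C$, and in particular the whole covering sieve of pseudo-constants, factors through objects with exactly one point; hence the morphisms from objects of $\calC_!$ generate the $J$-covering sieve on $C$, which is exactly rigidity. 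I expect the main obstacle to be the careful verification that the $J$-irreducible objects are \emph{exactly} $\calC_!$ and not a larger or smaller class --- one must check both that one-point objects are irreducible and that no object with more than one point can be irreducible, the latter because such an object admits a proper covering sieve of pseudo-constants. This dovetails with Lemma~\ref{LemShrieck}, which already singles out $\calC_!$ as the largest full subcategory inducing an essential subquality, so the final step is to observe that the subquality produced by Proposition~\ref{PropEpsilonExists} is essential (being a presheaf subtopos $\Psh{\calC_!}$) and hence, being largest among all subqualities, is in particular the largest essential one, i.e.\ level~$\epsilon$.
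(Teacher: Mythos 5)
Your proposal is correct and follows essentially the same route as the paper: both deduce idempotency of the ideal of pseudo-constants from the factorization through one-point objects, invoke Proposition~\ref{PropEpsilonExists} to get existence of level~$\epsilon$ as the largest subquality, and then identify it with ${\Psh{\calC_!}}$ by showing the induced topology is rigid with irreducible objects exactly those of $\calC_!$. Your verification that the $J$-irreducible objects are precisely the one-point objects is merely a more explicit spelling-out of the step the paper states in one line.
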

\begin{proof}
An object  $C$ in $\calC$ has exactly one point if and only if the identity on $C$ is a pseudo-constant. So a little figure (which is of course a pseudo-constant) factors trivially as a composite of pseudo-constants. Therefore, if $\calC$ has enough little figures then the ideal of pseudo-constants is idempotent. Proposition~\ref{PropEpsilonExists} implies that level $\epsilon$ exists and that it coincides with the largest subquality of $p$. It remains to show that level $\epsilon$ coincides with the indicated presheaf subtopos, but notice that the Grothendieck topology determined by the ideal of pseudo-constants  is rigid  because a sieve on $C$ is covering if and only if it contains all the little figures of $\calC$; that is all the morphisms whose domain has exactly one point. The irreducible objects w.r.t. to this topology are exactly those in ${\calC_!}$ so level $\epsilon$ coincides with ${\Psh{\calC_!} \rightarrow \Psh{\calC}}$.
\end{proof}

\section{The Weil subquality of  the Gaeta topos of $\complex$}
\label{SecWeilSubquality}

All algebras we consider are commutative and unital as in \cite{AtiyahMacdonald}. The following is a straightforward generalization of Definition~{2.14} in \cite{Dubuc1979} allowing an arbitrary base field instead of $\reals$. Let $k$ be a field.

\begin{definition}\label{DefWeilAlgebra} {\em
A {\em Weil algebra (over $k$)} is a $k$-algebra $A$ such that:
\begin{enumerate}
\item $A$ is local, say, with unique maximal ideal $\mathfrak{m}$.
\item The composite ${k \rightarrow A \rightarrow A/\mathfrak{m}}$ is an isomorphism.
\item $A$ is a finite $k$-algebra (i.e. it is finitely generated as a $k$-module).
\item ${\mathfrak{m}^n = 0}$ for some $n$.
\end{enumerate}}
\end{definition}

It is known that there is some redundancy in this definition. 
Compare with the definition of {\em alg\`ebre local} in \cite{Weil1953},
or the definition  in I.16 of \cite{KockSDG2ed}.
What we need to relate Weil algebras with the material in the present paper is the following, surely folk, result.

\begin{lemma}\label{CharComplexWeilAlgebras} For any local $\complex$-algebra $A$ the following are equivalent:
\begin{enumerate}
\item $A$ is a Weil algebra over $\complex$.
\item $A$ is finitely generated.
\item $A$ is Artinian.
\end{enumerate}
\end{lemma}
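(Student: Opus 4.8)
The plan is to prove the cyclic chain of implications (1) $\Rightarrow$ (2) $\Rightarrow$ (3) $\Rightarrow$ (1), using standard commutative algebra together with the Nullstellensatz; throughout, write $\mathfrak{m}$ for the maximal ideal of the local ring $A$, so that $A/\mathfrak{m}$ is its residue field, and read ``finitely generated'' in item~(2) as finitely generated \emph{as a $\complex$-algebra} (in contrast with the ``finite'' of item~3 of Definition~\ref{DefWeilAlgebra}, which means finitely generated as a module). The first implication is immediate: a Weil algebra is finite-dimensional over $\complex$ by item~3 of Definition~\ref{DefWeilAlgebra}, and a finite-dimensional $\complex$-algebra is finitely generated as an algebra since any $\complex$-linear basis is a set of algebra generators.

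For (2) $\Rightarrow$ (3), recall that a finitely generated $\complex$-algebra is Noetherian by the Hilbert basis theorem and Jacobson by the Nullstellensatz. Since $A$ is local, $\mathfrak{m}$ is its unique maximal ideal; in a Jacobson ring every prime is the intersection of the maximal ideals containing it, so the only prime of $A$ is $\mathfrak{m}$, and hence $A$ has Krull dimension $0$. A Noetherian ring of Krull dimension $0$ is Artinian, which is exactly~(3).

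For (3) $\Rightarrow$ (1) I would first recover the two ``nilpotent/finite'' conditions of Definition~\ref{DefWeilAlgebra}: the Jacobson radical of an Artinian ring is nilpotent and, $A$ being local, this radical is $\mathfrak{m}$, whence $\mathfrak{m}^{n}=0$ for some $n$ (item~4); moreover an Artinian ring has finite length over itself, with every composition factor isomorphic to $A/\mathfrak{m}$. Granting the identification $A/\mathfrak{m}\cong\complex$, finite length then forces $A$ to be finite-dimensional over $\complex$ (item~3) and makes the structure map $\complex\to A/\mathfrak{m}$ an isomorphism (item~2), so that $A$ is a Weil algebra. The whole implication thus reduces to the single point $A/\mathfrak{m}\cong\complex$, and I expect this to be the main obstacle, since it is the only place where the choice of base field enters essentially. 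The tool is Zariski's lemma (the algebraic form of the Nullstellensatz): the residue field of a finitely generated local $\complex$-algebra is a finitely generated field extension of $\complex$, hence finite over $\complex$, hence equal to $\complex$ because $\complex$ is algebraically closed. Over a non-closed field the residue field could be a proper finite extension, and without some finiteness hypothesis a local Artinian $\complex$-algebra need not even be finite-dimensional (witness a transcendental field such as $\complex(t)$); it is precisely the finite generation available in the relevant setting, via the equivalence with~(2), that allows (3) to return to~(1).
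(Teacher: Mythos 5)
Your chain (1) $\Rightarrow$ (2) $\Rightarrow$ (3) $\Rightarrow$ (1) follows the paper's proof almost step for step in its first two implications: the observation that a linear basis generates the algebra gives (1) $\Rightarrow$ (2), and the combination ``finitely generated $\Rightarrow$ Jacobson, local Jacobson $\Rightarrow$ unique prime, Noetherian of dimension zero $\Rightarrow$ Artinian'' is exactly the paper's route for (2) $\Rightarrow$ (3) (it cites Exercises 5.23, 5.24 and 8.2 of Atiyah--Macdonald for these facts). The ingredients you assemble for (3) $\Rightarrow$ (1) --- nilpotency of the radical, finiteness as a module, and the residue-field computation via Zariski's lemma --- are also the ones the paper uses.

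The gap is in how you close the cycle. You rightly observe that Zariski's lemma requires $A$ to be finitely generated, and you propose to obtain this ``via the equivalence with (2)''. But in a cyclic chain of implications, (2) is not available as a hypothesis while proving (3) $\Rightarrow$ (1); what your argument actually establishes is $(2)\wedge(3)\Rightarrow(1)$, which together with the other two steps yields only $(1)\Leftrightarrow(2)$ and $(1)\Rightarrow(3)$, not the stated three-way equivalence. Nor can the step be patched: $\complex(t)$ is a local Artinian $\complex$-algebra (being a field) that is neither finitely generated nor a Weil algebra, so (3) $\Rightarrow$ (1) fails for arbitrary local $\complex$-algebras. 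You have in fact located a genuine weak point of the lemma as stated, since the paper's own proof of this implication invokes Corollary 7.10 of Atiyah--Macdonald, which carries the same finite-generation hypothesis silently. In the only place the lemma is applied (Proposition~\ref{PropWeilSubquality}) all algebras are finitely generated, so the content that is actually needed, and that your argument does prove, is $(1)\Leftrightarrow(2)$ together with $(2)\Rightarrow(3)$; to recover the full equivalence one must either read ``Artinian'' as ``finite-dimensional over $\complex$'' or add finite generation to the ambient hypotheses.
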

\begin{proof}
The first item implies the second because, as $A$ is a finite $k$-algebra by hypothesis, then it is finitely generated. Indeed, any basis for the finite dimensional vector space $A$ generates $A$ as a $k$-algebra.

If $A$ is finitely generated then it is a  Jacobson ring by Exercises~{5.23} and~{5.24} in \cite{AtiyahMacdonald}, so every prime ideal is an intersection of maximal ideals.
As $A$ is local, it has a unique prime ideal (which must coincide with the maximal one). In this case, the algebra is Artinian by Exercise~{8.2} op.~cit.

Finally, if $A$ is Artinian and  ${\mathfrak{m}}$ is the unique maximal ideal of $A$ then the composite ${\complex \rightarrow A \rightarrow A/\mathfrak{m}}$ must be an iso by the `weak' version of Hilbert's Nullstellensatz (Corollary~{7.10} op.~cit.). 
Also, $A$ is a finite $k$-algebra by Exercise~{8.3} op.~cit.
Moreover, $\mathfrak{m}$ must be the nilradical of $A$, so $\mathfrak{m}$ is nilpotent by Proposition~{8.4} op.~cit.
\end{proof}

Let $\Ring$ be the category of rings and ${\complex/\Ring}$ be the coslice category of $\complex$-algebras. The full subcategory of finitely generated $\complex$-algebras will be denoted by ${(\complex/\Ring)_{f.g.} \rightarrow \complex/\Ring}$.
The category ${\calD = \opCat{((\complex/\Ring)_{f.g.})}}$ is essentially small and extensive.
The associated Gaeta topos will be denoted by ${\mathfrak{G} = \mathfrak{G}(\calD)}$ and call it the {\em Gaeta topos of $\complex$}.

   The Gaeta topos of $\complex$ is well-known to be a presheaf topos. To recall that description define a ring to be {\em (directly) indecomposable} if it has exactly two idempotents and let ${(\complex/\Ring)_{f.g.i.} \rightarrow (\complex/\Ring)_{f.g.}}$ be the full subcategory of those finitely generated algebras that are indecomposable.
Let ${\calC = \opCat{((\complex/\Ring)_{f.g.i.})}}$ so that the obvious inclusion ${\calC \rightarrow \calD}$ is the subcategory of those objects in $\calD$ that are `connected' in the sense that they have  no non-trivial coproduct decompositions.
   
   The Gaeta topos $\mathfrak{G}$ may be identified with the topos ${\Psh{\calC}}$ of presheaves on $\calC$.
   By Hilbert's Nullstellensatz, every object in $\calC$  has a point so the canonical geometric morphism ${p : \mathfrak{G} \rightarrow \Sets}$ is pre-cohesive. Moreover, there are certainly objects in ${\calC}$ that have more than one point so $p$ is Sufficiently Cohesive.
   
   If we let ${\calW \rightarrow (\complex/\Ring)_{f.g.i.}}$ be the full subcategory of Weil algebras then  ${\mathfrak{W} = \Set^{\calW}}$ is the {\em Weil topos} discussed in \cite{Dubuc1979}. 
   
\begin{proposition}\label{PropWeilSubquality} 
The Weil topos ${\mathfrak{W}}$ is an essential subquality of the Gaeta topos ${\mathfrak{G}}$.
\end{proposition}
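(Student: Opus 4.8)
The plan is to identify the Weil topos $\mathfrak{W} = \Set^{\calW}$ with a presheaf subtopos of $\mathfrak{G} \cong \Psh{\calC}$ and then verify the two conditions in Definition~\ref{DefSubquality}: that it is above level~0, and that the composite to $\Set$ is a quality type. Since $\mathfrak{G} \cong \Psh{\calC}$ with $\calC = \opCat{((\complex/\Ring)_{f.g.i.})}$, and the inclusion ${\calW \rightarrow (\complex/\Ring)_{f.g.i.}}$ of Weil algebras gives a full subcategory, it is natural to treat $\mathfrak{W}$ as ${\Psh{\calW^{\mathrm{op}}} \rightarrow \Psh{\calC}}$ arising from a full subcategory of $\calC$. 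The first thing I would check is that this subcategory is closed under splitting of idempotents: a retract of a Weil algebra is again finitely generated, local, and Artinian, hence Weil by Lemma~\ref{CharComplexWeilAlgebras}. (One must be careful that indecomposability/locality is inherited by retracts in the appropriate variance; a Weil algebra is local, so in particular directly indecomposable, and the relevant closure is for the opposite category $\calC$, so I would spell out exactly which idempotents are being split and in which category.)

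Next I would apply Corollary~\ref{CorLargestEssentialSubQTNew}, which reduces everything to two checks about the subcategory ${\calW^{\mathrm{op}} \rightarrow \calC}$: that it contains the terminal object of $\calC$, and that every object of $\calW^{\mathrm{op}}$ has a \emph{unique} point. The terminal object of $\calC = \opCat{((\complex/\Ring)_{f.g.i.})}$ corresponds to the initial finitely generated indecomposable algebra, namely $\complex$ itself, which is trivially a Weil algebra (its maximal ideal is $0$); so the first condition is immediate. For the second condition I would use that points of an object of $\calC$ correspond to $\complex$-algebra homomorphisms from the corresponding algebra to $\complex$, i.e. to $\complex$-points of the spectrum. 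The key algebraic fact is that a Weil algebra $A$, being local with residue field $\complex$ (Definition~\ref{DefWeilAlgebra}, item~2), admits exactly one $\complex$-algebra map $A \rightarrow \complex$: any such map must kill the unique maximal ideal $\mathfrak{m}$ (since $\complex$ is a field, the kernel is a maximal ideal, and $A$ has only one), and then it is forced to be the canonical quotient $A \rightarrow A/\mathfrak{m} \cong \complex$. Hence every Weil algebra has a unique point.

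With both conditions verified, Corollary~\ref{CorLargestEssentialSubQTNew} delivers that ${\mathfrak{W} = \Psh{\calW^{\mathrm{op}}} \rightarrow \Psh{\calC} = \mathfrak{G}}$ is an essential subquality of $p$, which is exactly the claim. The main obstacle I anticipate is the bookkeeping around variance and the splitting-of-idempotents hypothesis: the cited corollaries are phrased for a site $\calC$ in which idempotents already split and for full subcategories closed under idempotent splitting, so I would need to confirm that $(\complex/\Ring)_{f.g.i.}$ (equivalently $\calC$) has split idempotents and that $\calW$ is genuinely closed under the relevant splitting, invoking Lemma~\ref{CharComplexWeilAlgebras} to see that the retract-closure stays within Weil algebras. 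The algebraic content itself is light—uniqueness of the point is essentially condition~2 in the definition of a Weil algebra—so once the categorical framing via Corollary~\ref{CorLargestEssentialSubQTNew} is set up correctly, the proof is short.
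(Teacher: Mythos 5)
Your proof is correct and follows essentially the same route as the paper: the paper identifies $\calW$ with the subcategory $\calC_!$ of objects with exactly one point (via Lemma~\ref{CharComplexWeilAlgebras}) and then invokes Lemma~\ref{LemShrieck}, which is exactly Corollary~\ref{CorLargestEssentialSubQTNew} specialized to $\calC_!$. Your extra care about idempotent splitting and the direct verification that a local algebra with residue field $\complex$ has a unique point are sound and merely make explicit what the paper leaves implicit.
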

\begin{proof}
By Lemma~\ref{CharComplexWeilAlgebras}, the full subcategory ${\opCat{\calC_!} \rightarrow \opCat{\calC} = (\complex/\Ring)_{f.g.i.}}$ coincides with ${\calW \rightarrow (\complex/\Ring)_{f.g.i.}}$. So  ${\mathfrak{W} = \Sets^{\calW} = \Psh{\calC_!} \rightarrow \Psh{\calC} = \mathfrak{G}}$ is an essential subquality by Lemma~\ref{LemShrieck}.
\end{proof}

%
%

\section{The Weil subquality is level~$\epsilon$ of the Gaeta topos}
\label{SecGaeta}

Let ${\calD}$ be a category with terminal object and let ${L : \calD_{\bullet} \rightarrow \calD}$ be the full subcategory determined by the objects whose points are jointly epic.

\begin{lemma}\label{LemEpicPseudoConstantsWithCoReducedDonain} If ${L A}$ has a point and ${e : L A \rightarrow V}$ in $\calD$ is an epic pseudo-constant then ${V = 1}$.
\end{lemma}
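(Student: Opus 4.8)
The plan is to show that $V$ is terminal by producing a point of $V$ that is a two‑sided inverse of the unique map ${!}_V : V \to 1$. Since $LA$ has a point $a_0 : 1 \to LA$, I would set $v_0 = e a_0 : 1 \to V$ as the candidate inverse. Because $1$ is terminal the composite ${!}_V \circ v_0 : 1 \to 1$ is forced to be $\mathrm{id}_1$, so the entire problem reduces to verifying the reverse composite $v_0 \circ {!}_V = \mathrm{id}_V$; once both composites are identities we conclude $V \cong 1$.

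To establish $v_0 \circ {!}_V = \mathrm{id}_V$ I would exploit that $e$ is epic: it suffices to check the two maps agree after precomposition with $e$, i.e. that $(v_0 \circ {!}_V) \circ e = e$. Since ${!}_V \circ e : LA \to 1$ must be the unique map ${!}_{LA}$, the left‑hand side is the ``constant'' morphism $v_0 \circ {!}_{LA} : LA \to V$, so the task becomes showing $v_0 \circ {!}_{LA} = e$.

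Here I would use that $A$ lies in $\calD_{\bullet}$, i.e. that the points of $LA$ are jointly epic: it is enough to check that $v_0 \circ {!}_{LA}$ and $e$ agree after precomposition with an arbitrary point $a : 1 \to LA$. On one side $(v_0 \circ {!}_{LA}) \circ a = v_0$, since ${!}_{LA} \circ a = \mathrm{id}_1$; on the other side $e \circ a = e a_0 = v_0$ by the pseudo‑constant hypothesis on $e$, which forces every point of $LA$ to be carried by $e$ to the same point as $a_0$. Hence the two maps agree on all points, joint‑epicness gives $v_0 \circ {!}_{LA} = e$, and then epicness of $e$ yields $v_0 \circ {!}_V = \mathrm{id}_V$, completing the proof that $V \cong 1$.

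The argument is a short diagram chase and I do not expect a serious obstacle; the one point requiring care is the order of the two cancellation steps — first using the epimorphism $e$ to reduce an equation of maps $V \to V$ to one of maps $LA \to V$, and only then using joint‑epicness of the points of $LA$ to reduce further to a pointwise check, where pseudo‑constancy closes the gap. The hypothesis that $LA$ has a point is exactly what supplies the candidate inverse $v_0$ and lets the pseudo‑constant condition take effect.
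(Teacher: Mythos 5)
Your proof is correct and is essentially the paper's own argument in slightly more explicit form: the paper observes that the composites of the jointly epic family of points of $LA$ with the epi $e$ collapse, by pseudo-constancy, to a single epic point ${1 \rightarrow V}$, which is split monic and hence an iso, while you unwind the same cancellations ($e$ epic, then the points jointly epic) to verify directly that $e a_0$ is a two-sided inverse of ${V \rightarrow 1}$. No gap; the two presentations differ only in bookkeeping.
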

\begin{proof}
As $e$ is epic and the points of ${L A}$ are jointly epic, the family of all composites 
$$\xymatrix{
1 \ar[r] & L A  \ar[r]^-e & V
}$$
is jointly epic but, as $e$ is pseudo-constant, there is only one such map, so we have an epic ${1 \rightarrow V}$ which of course is also split monic, so $V$ is terminal.
\end{proof}

Natural further hypotheses allow us to deal with more pseudo-constants.

\begin{lemma}\label{LemEpicPseudoConstants} Assume that ${L : \calD_{\bullet} \rightarrow \calD}$ has an epic-preserving right adjoint with monic counit.
If ${X}$ has a point then, for every epic pseudo-constant ${f : X \rightarrow Y}$, $Y$ has exactly one point.
\end{lemma}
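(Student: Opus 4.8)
The plan is to transport $f$ along the adjunction and reduce to Lemma~\ref{LemEpicPseudoConstantsWithCoReducedDonain}. Write $R$ for the given right adjoint of $L$ and $\varepsilon : LR \to 1_{\calD}$ for its (monic) counit; recall that, since $L$ is a full subcategory inclusion with $L 1 = 1$, the unit of $L \dashv R$ is invertible and, for every object $Z$ of $\calD$, the adjunction identifies the points of $Z$ with the points of $RZ$. I will show that $L(Rf) : L(RX) \to L(RY)$ is an epic pseudo-constant whose domain is of the form $LA$ and has a point; the previous lemma will then force $L(RY) \cong 1$, and reading this back through the adjunction will give that $Y$ has exactly one point.

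First, the formal parts. Applying $R$ to the epic $f$ gives an epic $Rf : RX \to RY$ because $R$ is assumed epic-preserving; since $L$ is a left adjoint it preserves epimorphisms, so $L(Rf)$ is epic in $\calD$. As $RX$ lies in $\calD_{\bullet}$, the domain $L(RX)$ is of the required form $LA$. Finally, $L(RX)$ has a point: the points of $X$ correspond bijectively to the points of $RX$ (hence of $L(RX)$, as $L$ is full and faithful with $L 1 = 1$), and $X$ has a point by hypothesis.

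The main step is to check that $L(Rf)$ is a pseudo-constant, and this is where the monic counit is used. Every point $1 \to L(RX)$ is of the form $Lb$ for a unique point $b : 1 \to RX$, and $\varepsilon_X \circ Lb : 1 \to X$ is the corresponding point of $X$. By naturality of $\varepsilon$ we have $\varepsilon_Y \circ L(Rf) \circ Lb = f \circ \varepsilon_X \circ Lb$. Since $f$ is pseudo-constant, the right-hand side takes the same value for all points $b$, so $\varepsilon_Y \circ (L(Rf) \circ Lb)$ is independent of $b$; as $\varepsilon_Y$ is monic we may cancel it and conclude that $L(Rf) \circ Lb$ is independent of $b$. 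Thus $L(Rf)$ is constant on points, i.e.\ a pseudo-constant.

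It now remains to assemble the pieces. Lemma~\ref{LemEpicPseudoConstantsWithCoReducedDonain}, applied to the epic pseudo-constant $L(Rf) : L(RX) \to L(RY)$ with $L(RX)$ having a point, yields $L(RY) \cong 1$; since $L$ is full and faithful and $L 1 = 1$, this gives $RY \cong 1$. Transporting back along the adjunction, the points of $Y$ are in bijection with the points of $RY$, and $RY \cong 1$ has a single point, so $Y$ has exactly one point. I expect the pseudo-constancy of $L(Rf)$ to be the only delicate point, precisely because transferring the ``constant on points'' condition from $Y$ back to $L(RY)$ needs the counit to be monic; the remaining steps are formal consequences of the adjunction together with $R$ and $L$ preserving epimorphisms.
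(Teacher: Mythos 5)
Your proof is correct and follows essentially the same route as the paper's: apply $LR$ to $f$, use epi-preservation of $R$ and $L$ together with the monic counit and its naturality square to see that $L(Rf)$ is an epic pseudo-constant whose domain has a point, invoke Lemma~\ref{LemEpicPseudoConstantsWithCoReducedDonain} to get $L(RY)\cong 1$, and transport back along the adjunction. The only cosmetic difference is that you phrase the last step via the hom-set bijection (points of $Y$ correspond to points of $RY\cong 1$) where the paper says every point of $Y$ factors through the counit $L(RY)\to Y$; these are the same observation.
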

\begin{proof}
Let $R$ be the right adjoint to $L$ and denote the counit by ${\beta}$.
As ${L (R 1) = 1}$,  ${L (R X)}$ must have a point because $X$ does by hypothesis. 
Also, the map ${R f}$ is epic by hypotheses and then so is ${L (R f) : L (R X) \rightarrow L (R Y)}$. 
Moreover, it is a pseudo-constant because,  ${\beta_Y : L (R Y) \rightarrow Y}$ is monic and ${\beta_Y (L (R f)) = f \beta_X}$.
 Lemma~\ref{LemEpicPseudoConstantsWithCoReducedDonain} implies that ${L (R Y) = 1}$.
As every point of $Y$ factors through ${ \beta : L(R Y) \rightarrow Y}$, $Y$ has exactly one point.
\end{proof}

The next result supplies many little figures.

\begin{proposition}\label{PropAbstractEnoughPseudoConstants} If every map in $\calD$ factors as an epi followed by a mono  and ${L : \calD_{\bullet} \rightarrow \calD}$ has an epi-preserving right adjoint with monic counit then  every pseudo-constant whose domain has a point factors via an object with exactly one point.
\end{proposition}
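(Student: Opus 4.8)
The plan is to reduce to the epic case already handled by Lemma~\ref{LemEpicPseudoConstants}, using the epi-mono factorization supplied by the first hypothesis. So suppose $f : X \to Y$ is a pseudo-constant and $X$ has a point. First I would factor $f$ as a composite $f = m e$, with $e : X \to Z$ epic and $m : Z \to Y$ monic; such a factorization exists by the first hypothesis. The object $Z$ is the candidate through which $f$ will factor, so the goal reduces to showing that $Z$ has exactly one point.

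The key observation is that the epic part $e$ is itself a pseudo-constant. Indeed, given any two points $a, b : 1 \to X$, the assumption that $f$ is a pseudo-constant gives $m(e a) = f a = f b = m(e b)$, and since $m$ is monic we may cancel it to obtain $e a = e b$. Thus $e : X \to Z$ is constant on the points of its domain, that is, it is a pseudo-constant; moreover it is epic and its domain $X$ has a point.

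At this point I would simply invoke Lemma~\ref{LemEpicPseudoConstants}, whose hypotheses on $L$ are exactly the second assumption of the present proposition. Since $e$ is an epic pseudo-constant whose domain $X$ has a point, the lemma yields that the codomain $Z$ has exactly one point. Therefore $f = m e$ factors through $Z$, an object with exactly one point, which is precisely the assertion to be proved.

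I do not expect a serious obstacle here. The only mildly delicate point is recognizing that the epic half of the factorization inherits the pseudo-constant property, and this hinges on cancelling the monic $m$ in the identity $m(e a) = m(e b)$. The substantive work---converting an epic pseudo-constant with pointed domain into one whose codomain has a unique point---has already been isolated in Lemma~\ref{LemEpicPseudoConstants}, so this proposition is essentially a repackaging of the factorization hypothesis together with that lemma.
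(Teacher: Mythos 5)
Your proposal is correct and follows exactly the paper's argument: factor the pseudo-constant as $f = me$ and apply Lemma~\ref{LemEpicPseudoConstants} to the epic part. The only difference is that you explicitly verify (by cancelling the monic $m$) that $e$ inherits the pseudo-constant property, a detail the paper leaves implicit; this is a welcome clarification rather than a deviation.
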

\begin{proof}
Let ${f}$ be  pseudo-constant whose domain has a point. 
By hypothesis,  ${f =  m e }$ for some monic $m$ and epic $e$.  Then the codomain of $e$ has exactly one point by Lemma~\ref{LemEpicPseudoConstants}.
\end{proof}

The proof of Proposition~\ref{PropAbstractEnoughPseudoConstants} is, in essence, that  in \cite{Cornulier2018}. This will become evident below where we discuss the context of Cornulier's Mathoverflow answer.

Let $\Ring$ be the category of (commutative unital) rings and consider the full subcategory  ${\Red \rightarrow \Ring}$  of reduced rings (i.e. those whose only nilpotent element is $0$).

\begin{lemma}\label{LemReduced}
This inclusion ${\Red \rightarrow \Ring}$ has  a left adjoint that preserves monomorphisms.
Moreover, the unit of the adjunction is regular epic.
\end{lemma}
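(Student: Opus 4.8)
The plan is to exhibit the left adjoint explicitly as the reduction functor. For a ring $R$ let $\Nil(R)$ denote its nilradical, the ideal of nilpotent elements (it is an ideal because the ring is commutative), and set ${R \mapsto R/\Nil(R)}$ on objects, with the action on morphisms induced by the fact that ring homomorphisms carry nilpotents to nilpotents. First I would check that ${R/\Nil(R)}$ is reduced: since $\Nil(R)$ is the intersection of all prime ideals of $R$, the nilradical of the quotient is trivial. The quotient map ${\eta_R : R \rightarrow R/\Nil(R)}$ will serve as the unit.

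Next I would verify the universal property. Given a reduced ring $S$ and any homomorphism ${f : R \rightarrow S}$, every element of $\Nil(R)$ maps to a nilpotent of $S$, hence to $0$ since $S$ is reduced; so $f$ factors uniquely through $\eta_R$. This establishes that reduction is left adjoint to the inclusion ${\Red \rightarrow \Ring}$, with unit $\eta$. The claim that the unit is regular epic is then immediate: each $\eta_R$ is a surjective ring homomorphism, and surjections in $\Ring$ are regular epimorphisms, being coequalizers of their kernel pairs.

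The substantive point, and the step I expect to require the most care, is that this left adjoint preserves monomorphisms, since left adjoints need not do so in general. Recall that monomorphisms in $\Ring$ are exactly the injective homomorphisms. So let ${f : R \rightarrow R'}$ be injective; I would show that the induced ${\overline{f} : R/\Nil(R) \rightarrow R'/\Nil(R')}$ is again injective. If $\overline{f}$ annihilates the class of $x$, then $f(x)$ is nilpotent in $R'$, say ${f(x)^n = f(x^n) = 0}$; injectivity of $f$ forces ${x^n = 0}$, so ${x \in \Nil(R)}$ and its class vanishes. Equivalently, ${f^{-1}(\Nil(R')) = \Nil(R)}$ for injective $f$, which is the precise reason the reduction of a monomorphism is again a monomorphism. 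The only subtlety is thus the interaction of the nilradical with preimages, and the computation above resolves it directly.
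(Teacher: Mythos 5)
Your proposal is correct and follows essentially the same route as the paper: the left adjoint is the reduction functor ${R \mapsto R/\Nil(R)}$, the unit is the (surjective, hence regular epic) quotient map, and mono-preservation comes down to the observation that ${f^{-1}(\Nil(R')) = \Nil(R)}$ for injective $f$, which is exactly the identity the paper invokes. You simply spell out the universal property and the nilradical computation in more detail than the paper, which cites Atiyah--Macdonald for the construction.
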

\begin{proof}
The left adjoint sends $R$ in $\Ring$ to ${R/\Nil(R)}$ where ${\Nil(R)}$ is the nilradical of $R$.
See Proposition~{1.7} in \cite{AtiyahMacdonald}. The unit ${R \rightarrow R/\Nil(R)}$ is a regular epimorphism and the left adjoint ${\Ring \rightarrow\Red}$ preserves monos because if ${m : R \rightarrow S}$ is a monomorphism then ${m^* \Nil(S)  = \Nil(R)}$ as subsets of $R$. 
\end{proof}

 Let $\mathbb{C}$ be the field of complex numbers and consider the coslice category ${\mathbb{C}/\Ring}$ of {\em $\mathbb{C}$-algebras}.

\begin{lemma}\label{LemJacobson} A finitely generated $\complex$-algebra $R$ is reduced (as a ring) if and only if the family of all maps ${R \rightarrow \complex}$ is jointly monic.
\end{lemma}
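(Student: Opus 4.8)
The goal is to characterize when a finitely generated $\complex$-algebra $R$ is reduced in terms of the maps $R \to \complex$ being jointly monic. The plan is to prove each direction separately, using Hilbert's Nullstellensatz (in its strong and weak forms) as the main external input.

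For the forward direction, suppose $R$ is reduced. I want to show that the family of all $\complex$-algebra homomorphisms $R \to \complex$ is jointly monic, i.e.\ that if $a \in R$ is nonzero then some homomorphism $R \to \complex$ does not kill $a$. The key point is that, since $R$ is finitely generated over $\complex$, it is a Jacobson ring (Exercises 5.23 and 5.24 in \cite{AtiyahMacdonald}, as already used in the proof of Lemma~\ref{CharComplexWeilAlgebras}), so the nilradical (which is the intersection of all prime ideals) coincides with the Jacobson radical (the intersection of all maximal ideals). Because $R$ is reduced, the nilradical is zero, hence the intersection of all maximal ideals is zero. For any maximal ideal $\mathfrak{m}$, the weak Nullstellensatz (Corollary~7.10 op.~cit.) gives that the composite $\complex \to R \to R/\mathfrak{m}$ is an isomorphism, so the quotient map corresponds to a $\complex$-algebra homomorphism $R \to \complex$ whose kernel is $\mathfrak{m}$. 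Thus if $a \neq 0$, then $a$ lies outside some maximal ideal $\mathfrak{m}$, and the associated homomorphism $R \to \complex$ sends $a$ to a nonzero element. This shows the maps $R \to \complex$ are jointly monic.

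For the converse, suppose the family of all maps $R \to \complex$ is jointly monic. I want to show $R$ is reduced, i.e.\ that its only nilpotent element is $0$. This direction is easy: if $x \in R$ is nilpotent, say $x^n = 0$, then for any $\complex$-algebra homomorphism $\varphi : R \to \complex$ we have $\varphi(x)^n = \varphi(x^n) = \varphi(0) = 0$, and since $\complex$ is a field (reduced), $\varphi(x) = 0$. As this holds for every such $\varphi$ and the family is jointly monic, we conclude $x = 0$. Hence $R$ is reduced.

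I expect the forward direction to be the only one requiring real content, and the main obstacle there is simply invoking the correct form of the Nullstellensatz to identify $\complex$-points of $R$ with maximal ideals having residue field $\complex$, together with the Jacobson property to pass from ``reduced'' (intersection of primes is zero) to ``intersection of maximals is zero''. Both facts are available from \cite{AtiyahMacdonald} and have already been used in the proof of Lemma~\ref{CharComplexWeilAlgebras}, so the argument should be short.
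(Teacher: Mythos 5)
Your proof is correct and follows essentially the same route as the paper: the converse via the embedding into a power of $\complex$ (equivalently, your elementwise nilpotence argument), and the forward direction via the Jacobson property of finitely generated algebras over a field together with the weak Nullstellensatz identifying maximal ideals with kernels of maps to $\complex$. The paper's version is merely terser, leaving the Nullstellensatz step implicit in its final ``in other words''.
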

\begin{proof}
If the family  of maps ${R \rightarrow \complex}$ is jointly monic then $R$ is, as a ring, a subobject of a power of $\complex$ so it is reduced.
Conversely, assume that $R$ is reduced. That is,  the nilradical ${\Nil(R)}$ is trivial. For finitely generated algebras over a field, the nilradical equals the Jacobson radical (see Exercise~{5.24} in \cite{AtiyahMacdonald}), so the intersection of the maximal ideals in $R$ is $0$. In other words, the collection of all maps ${R \rightarrow \complex}$ is jointly monic.
\end{proof}

Let ${(\complex/\Ring)_{f.g.}}$ be the category of finitely generated $\complex$-algebras.

\begin{lemma}\label{LemPropIsApplicable} 
Every pseudo-constant in ${\opCat{((\complex/\Ring)_{f.g.})}}$ whose domain has a point factors via an object with exactly one point.
\end{lemma}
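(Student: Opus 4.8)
The plan is to reduce the statement about $\opCat{((\complex/\Ring)_{f.g.})}$ to the abstract machinery of Proposition~\ref{PropAbstractEnoughPseudoConstants}. Since the category in question is the opposite of $(\complex/\Ring)_{f.g.}$, a pseudo-constant ${D \rightarrow C}$ in $\opCat{((\complex/\Ring)_{f.g.})}$ corresponds to a morphism of $\complex$-algebras ${C \rightarrow D}$, and points of an object correspond (by Hilbert's Nullstellensatz, as noted in the Gaeta discussion) to algebra maps to $\complex$. So I would first translate the hypotheses: the domain having a point means there is a map to $\complex$ from the corresponding algebra, and the \emph{pseudo-constant} condition unwinds to say that all the maps ${D \rightarrow \complex}$ that factor through $C$ agree on the image of $C$.

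**Verifying the two hypotheses of Proposition~\ref{PropAbstractEnoughPseudoConstants}.** The abstract proposition needs (i) an epi-mono factorization in $\calD = \opCat{((\complex/\Ring)_{f.g.})}$, and (ii) a full subcategory inclusion ${L : \calD_{\bullet} \rightarrow \calD}$ with an epi-preserving right adjoint whose counit is monic. For (i), epis and monos in $\opCat{((\complex/\Ring)_{f.g.})}$ are monos and epis (respectively) of algebras, and the surjection/injection factorization of an algebra homomorphism supplies this, once one checks the middle object is still finitely generated (it is a quotient of a finitely generated algebra, hence finitely generated). For (ii), the natural candidate for $\calD_{\bullet}$ is the subcategory of objects whose points are jointly epic, which on the algebra side—by Lemma~\ref{LemJacobson}—is exactly the \emph{reduced} finitely generated $\complex$-algebras. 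The reflection ${R \mapsto R/\Nil(R)}$ of Lemma~\ref{LemReduced} then becomes, after passing to opposites, the required right adjoint $R$ to $L$; its unit being regular epic (Lemma~\ref{LemReduced}) dualizes to the monic counit, and its preservation of monos dualizes to preservation of epis.

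**Assembling the proof.** With both hypotheses verified, Proposition~\ref{PropAbstractEnoughPseudoConstants} applies directly and yields that every pseudo-constant in $\calD$ whose domain has a point factors through an object with exactly one point—which is precisely the claim. So the body of the argument is short: identify ${L : \calD_\bullet \rightarrow \calD}$ with the inclusion (in $\calD$) of the reduced algebras via Lemma~\ref{LemJacobson}, invoke Lemma~\ref{LemReduced} to get the adjoint with the correct epi/mono behaviour after dualizing, note the epi-mono factorization, and conclude by Proposition~\ref{PropAbstractEnoughPseudoConstants}.

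**The main obstacle.** The essential verifications are the dualizations in (ii): one must be careful that ``jointly epic points'' in $\opCat{((\complex/\Ring)_{f.g.})}$ really corresponds under Lemma~\ref{LemJacobson} to \emph{reduced} algebras, and that the direction of the adjunction and the epi/mono decorations flip correctly when passing to the opposite category. The conceptual content is entirely carried by Lemmas~\ref{LemJacobson} and~\ref{LemReduced} together with the abstract Proposition~\ref{PropAbstractEnoughPseudoConstants}; the labor is confined to matching up the categorical dualities, so I expect the hardest part to be stating these correspondences cleanly rather than proving anything genuinely new.
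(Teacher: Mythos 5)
Your proposal is correct and follows essentially the same route as the paper's own proof: verify the epi--mono factorization in $\calD$ via the surjection/injection factorization of algebra maps, identify $\calD_{\bullet}$ with the (opposite of the) reduced finitely generated algebras via Lemma~\ref{LemJacobson}, and obtain the required right adjoint with monic counit by dualizing the nilradical reflection of Lemma~\ref{LemReduced}, so that Proposition~\ref{PropAbstractEnoughPseudoConstants} applies. The only point the paper spells out that you leave implicit is that the reflection $R\mapsto R/\Nil(R)$, transported to the coslice $\complex/\Ring$, still preserves monos and restricts to finitely generated algebras, but these are routine checks.
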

\begin{proof}
It is enough to check that ${\calD = \opCat{((\complex/\Ring)_{f.g.})}}$ satisfies the hypotheses of Proposition~\ref{PropAbstractEnoughPseudoConstants}. 
It is well-known that $\calD$ has epi/regular-mono factorizations so it remains to show that the inclusion ${\calD_\bullet \rightarrow \calD}$ has an epi-preserving right adjoint 
with monic counit. We show that the inclusion ${\opCat{\calD_\bullet} \rightarrow (\complex/\Ring)_{f.g.}}$ satisfies the dual conditions.
Bear in mind that, by Lemma~\ref{LemJacobson}, the full subcategory  ${\opCat{\calD_\bullet} \rightarrow (\complex/\Ring)_{f.g.}}$ may be identified that of f.g. algebras that are reduced as rings.

The reflective subcategory ${\Red \rightarrow \Ring}$ of Lemma~\ref{LemReduced} induces another one such ${\complex/\Red \rightarrow \complex/\Ring}$. Also, the left adjoint 
${\complex/\Ring \rightarrow \complex/\Red}$ is again obtained by quotienting by the nilradical so the unit is again regular epic.
Moreover, it preserves monos because the canonical ${\complex/\Red \rightarrow \Red}$ reflects monos.

By Noetherianity, the nilradical of a finitely generated algebra is finitely generated so the left adjoint ${\complex/\Ring \rightarrow \complex/\Red}$ restricts to finitely generated algebras. That is, we have the reflective  ${\opCat{\calD_\bullet} \rightarrow (\complex/\Ring)_{f.g.}}$ satisfying the necessary conditions.
\end{proof}

Recall from Proposition~\ref{PropWeilSubquality}  that the Weil topos $\mathfrak{W}$ is a subtopos ${\mathfrak{W} \rightarrow \mathfrak{G}}$ of the Gaeta topos ${\mathfrak{G}}$ and that the subtopos is actually and essential subquality of the pre-cohesive ${\mathfrak{G} \rightarrow \Sets}$.

\begin{theorem}[The Weil subquality is level~$\epsilon$ of the Gaeta topos]\label{ThmLevelEpsilonOfGaeta}  
The essential subquality ${\mathfrak{W} \rightarrow \mathfrak{G}}$ is level~$\epsilon$ of the pre-cohesive ${p: \mathfrak{G} \rightarrow \Sets}$ and it coincides with the largest subquality of $p$.
\end{theorem}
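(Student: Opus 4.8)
The plan is to identify the Gaeta topos $\mathfrak{G}$ with the presheaf topos $\Psh{\calC}$, where $\calC = \opCat{((\complex/\Ring)_{f.g.i.})}$, and then apply the abstract machinery developed in Section~\ref{SecPresheaf}. The key observation is that everything reduces to verifying the hypothesis of Corollary~\ref{CorLargestSubqualityIsEssential}, namely that $\calC$ has enough little figures.

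First I would recall that, since every object of $\calC$ has a point (by Hilbert's Nullstellensatz, as noted in Section~\ref{SecWeilSubquality}), the canonical ${p : \Psh{\calC} \rightarrow \Sets}$ is pre-cohesive. Then the central task is to show that $\calC$ has enough little figures, that is, that every pseudo-constant ${D \rightarrow C}$ in $\calC$ factors through an object with exactly one point. Since every object of $\calC$ has a point, this is exactly the conclusion supplied by Lemma~\ref{LemPropIsApplicable}, provided we first check that the inclusion ${\calC \rightarrow \calD = \opCat{((\complex/\Ring)_{f.g.})}}$ is compatible with the factorization there. Concretely, I would note that $\calC$ consists of the indecomposable objects of $\calD$, and a pseudo-constant in $\calC$ is in particular a pseudo-constant in $\calD$; Lemma~\ref{LemPropIsApplicable} then produces a factorization through an object ${B}$ of $\calD$ with exactly one point. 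The point here is that an object of $\calD$ with exactly one point is automatically indecomposable (a single point cannot support a non-trivial coproduct decomposition), so ${B}$ in fact lies in $\calC$, giving the required little figure \emph{inside} $\calC$.

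Having established that $\calC$ has enough little figures, Corollary~\ref{CorLargestSubqualityIsEssential} immediately gives that ${\Psh{\calC_!} \rightarrow \Psh{\calC}}$ is level~$\epsilon$ of ${p}$ and coincides with the largest subquality of $p$. It then remains only to identify this presheaf subtopos with the Weil subquality. But this identification is precisely the content of Proposition~\ref{PropWeilSubquality}, whose proof uses Lemma~\ref{CharComplexWeilAlgebras} to show that ${\opCat{\calC_!}}$ coincides with the full subcategory ${\calW}$ of Weil algebras, whence ${\mathfrak{W} = \Sets^{\calW} = \Psh{\calC_!}}$. Combining these two facts yields the theorem.

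The main obstacle I anticipate is the bookkeeping in the first step: carefully confirming that the factorization produced by Lemma~\ref{LemPropIsApplicable} in the larger category $\calD$ actually stays within the subcategory $\calC$ of indecomposables. The crux is the elementary but essential remark that an object with exactly one point is indecomposable, so that the little figure $B$, a priori only an object of $\calD$, automatically belongs to $\calC$. Once this is in place, the theorem follows formally from the results already assembled, and no genuinely new computation is needed.
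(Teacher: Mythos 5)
Your proposal is correct and follows essentially the same route as the paper's own proof: identify $\mathfrak{G}$ with $\Psh{\calC}$, reduce via Corollary~\ref{CorLargestSubqualityIsEssential} to showing that $\calC$ has enough little figures, and obtain these from Lemma~\ref{LemPropIsApplicable} applied in $\calD$, noting that an object with exactly one point lies in $\calC$. Your explicit justification of that last step (a one-pointed object is indecomposable) is a detail the paper leaves implicit, but the argument is the same.
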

\begin{proof}
We identify the Gaeta topos $\mathfrak{G}$ with ${\Psh{\calC}}$ where $\calC$ is the opposite of the category of finitely generated complex algebras with exactly two idempotents.
By Corollary~\ref{CorLargestSubqualityIsEssential} it is enough to prove that $\calC$ has enough little figures so let ${f : X \rightarrow Y}$ be a pseudo-constant in $\calC$.
Then $f$ is a pseudo-constant in $\calD$ and $X$ has a point because every object of $\calC$ has a point. By Lemma~\ref{LemPropIsApplicable},  $f$ factors (in $\calD$) via a object with exactly one point. This object is necessarily in $\calC$ so the factorization of $f$ is inside $\calC$.
\end{proof}

We see Theorem~\ref{ThmLevelEpsilonOfGaeta} as a confirmation of Lawvere's suggestion (quoted in the beginning of the paper) that ``the infinitesimal spaces may well negate the largest essential subtopos of the ambient one which" has the property that ``the four adjoint functors relating their topos to the base topos coalesce into two".

%

\section{The Weil subquality is level~$\epsilon$ of the Zariski topos of $\complex$}
\label{SecZariski}

We show that the level~$\epsilon$ of the Gaeta topos for $\complex$ factors through the Zariski topos and, as a level of the latter, it is level~$\epsilon$.
Some of the ideas involving restricted subqualities may be formulated at an elementary level. We deal with these first.
Let ${p : \calE \rightarrow \calS}$ be a local geometric morphism.

\begin{proposition}\label{PropRestrictedSubquality}
Let $j:\calE_j\to\calE$ and $k:\calE_k\to\calE$ be subtoposes of $\calE$ and assume that $k$ is above $j$. If
${\calE_j\to \calE}$ is above the centre of $p$ (so that ${\calE_k\to\calE}$ is also above the centre of $p$) then the following
hold:
\begin{enumerate}
\item The subtopos $\calE_j\to \calE_k$ is above the centre of ${p k:\calE_k\to\calS}$.
\item If $\calE_j\to \calE$ is a subquality of $p$, then the subtopos ${\calE_j\to\calE_k}$ is a subquality of ${p k :\calE_k\to\calS}$.
\item If $\calE_j\to\calE$ is essential, then so is ${\calE_j\to\calE_k}$.
\item If a subtopos $\calE_l\to\calE_k$ is above the centre of ${p k : \calE_k \to\calS}$, then the subtopos
${\calE_l\to\calE_k\to\calE}$ is above the centre of $p$.
\end{enumerate}
\end{proposition}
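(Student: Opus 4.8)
The plan is to reduce all four items to Proposition~\ref{PropAboveCentre} together with the explicit formulas for the adjoints supplied by Corollary~\ref{estaeralaproposicion3.1}. Write $g = p k : \calE_k \to \calS$. Since $\calE_k$ is above the centre of $p$, item~6 of Proposition~\ref{PropAboveCentre} makes $g$ local, and we may take $g^! = k^* p^!$. Let $h : \calE_j \to \calE_k$ be the witness of ``$k$ above $j$'' from Corollary~\ref{estaeralaproposicion3.1}, so that $h^* = j^* k_*$ and $h_* = k^* j_*$; note $g h = p k h \simeq p j = f$, the composite local morphism, which will be used repeatedly. The two isomorphisms driving the argument are $\nu_{p^!} : p^! \to j_* j^* p^!$ (from $\calE_j$ above the centre of $p$) and $\nu_{p^!} : p^! \to k_* k^* p^!$ (from $\calE_k$ above the centre of $p$), both instances of item~4 of Proposition~\ref{PropAboveCentre}.

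For item~1 I would verify condition~2 of Proposition~\ref{PropAboveCentre} for the subtopos $h$ over the local morphism $g$, namely that $g^!$ factors through $h_*$. Indeed, applying $k^*$ to the unit $\nu_{p^!}$ gives
\[ g^! = k^* p^! \cong k^*(j_* j^* p^!) = (k^* j_*)(j^* p^!) = h_*(j^* p^!). \]
Item~4 is handled symmetrically: for a subtopos $m : \calE_l \to \calE_k$ above the centre of $g$, I would show that $p^!$ factors through $(k m)_* = k_* m_*$ by chaining the two relevant units,
\[ p^! \cong k_* k^* p^! = k_* g^! \cong k_*(m_* m^* g^!) = (k_* m_*)(m^* g^!), \]
using first the unit for $\calE_k$ above the centre of $p$ and then the unit for $\calE_l$ above the centre of $g$; this is condition~2 of Proposition~\ref{PropAboveCentre} for $k m$ over $p$.

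Item~3 is a direct adjoint computation. Since $\calE_j \to \calE$ is essential, $j^*$ has a left adjoint $j_!$, and for $Y$ in $\calE_j$ and $X$ in $\calE_k$ the chain of natural bijections
\[ \calE_k(k^* j_! Y, X) \cong \calE(j_! Y, k_* X) \cong \calE_j(Y, j^* k_* X) = \calE_j(Y, h^* X) \]
exhibits $k^* j_!$ as a left adjoint to $h^* = j^* k_*$; hence $\calE_j \to \calE_k$ is essential with $h_! = k^* j_!$. Finally, item~2 combines item~1 with the observation that the composite $g h$ is the local geometric morphism $f = p j$; as $\calE_j \to \calE$ is a subquality of $p$, $f$ is a quality type, and the canonical $\phi : f^* \to f^!$ depends only on $f$ as a local morphism to $\calS$, so $g h$ is a quality type and $\calE_j \to \calE_k$ is a subquality of $g$ by Definition~\ref{DefSubquality}.

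The steps are all routine once the setup is fixed; the only points requiring care are the bookkeeping of the ``above'' direction with its adjoint formulas, and — in item~2 — the observation that $\phi$ is intrinsic to the composite local morphism and hence unaffected by whether it is computed through $\calE$ or through $\calE_k$. I expect the verification that $g^! = k^* p^!$ and its compatibility with the units $\nu_{p^!}$ to be the most delicate link, since this is where the hypothesis that $\calE_k$ is above the centre of $p$ is genuinely used.
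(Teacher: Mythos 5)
Your proof is correct and follows essentially the same route as the paper: both reduce all four items to the list of equivalent conditions in Proposition~\ref{PropAboveCentre} (and Corollary~\ref{estaeralaproposicion3.1}), differing only in which equivalent condition is checked — you verify the factorizations of the right adjoints $g^!$ and $p^!$ (condition~2), while the paper checks that the relevant direct images invert the units (condition~5); your explicit computation $k^* j_! \dashv j^* k_*$ for item~3 is exactly what the paper's appeal to Remark~\ref{remarka} amounts to, and item~2 is handled identically in both.
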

\begin{proof} 1.\ According to Corollary \ref{estaeralaproposicion3.1} (and with the same notation introduced there) 
the unit of  $\calE_j\to \calE_k$ is $(k^*\nu_{k_*})\cdot {\xi'}^{-1}$. When we apply $p_*k_*:\calE_k\to\calE$
we observe that $p_*k_*k^*\nu_{k_*}$ is an iso since $p_*k_*k^*\simeq p_*$, given that $k$ is above the centre of $p$,
and $p_*\nu$ is an iso, given that $j$ is above the centre of $p$.

2. This follows at once since $\calE_j\to\calS$ is a quality type regardless of whether we consider $\calE_j$ as a 
subtopos of $\calE$ or of $\calE_k$.

3. This follows at once form Remark \ref{remarka}.

4. We must show that $p_*$ inverts the unit of $\calE_l\to\calE_k\to\calE$ assuming that $p_*k_*$ inverts the unit of 
$\calE_l\to\calE_k$; but this follows at once since $p_*$ inverts the unit of $k:\calE_k\to\calE$ because $\calE_k$ is above the centre of $p$.
\end{proof}

Proposition~\ref{PropRestrictedSubquality} allows to show that, if level~$\epsilon$ is not just that but is also the largest subquality then we can restrict it to subtoposes that contain it.
More precisely:

\begin{corollary}\label{CorRestrictedLargestSubquality} 
Assume that ${p : \calE \rightarrow \calS}$ has a largest subquality ${\calE_{j} \rightarrow \calE}$ and that it is essential (so that ${\calE_j \rightarrow \calE}$ is level~$\epsilon$ of $p$). If ${k : \calE_k \rightarrow \calE}$ is a subtopos above $j$ then the subtopos ${\calE_{j} \rightarrow \calE_k}$ is the largest subquality of ${p k : \calE_k \rightarrow \calS}$ and it is essential (so that ${\calE_{j} \rightarrow \calE_k}$ is level~$\epsilon$ of  ${p k}$). 
\end{corollary}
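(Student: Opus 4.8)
The plan is to reduce everything to the four items of Proposition~\ref{PropRestrictedSubquality} together with the characterization of the ordering of subtoposes in Corollary~\ref{estaeralaproposicion3.1}. Since $\calE_j \rightarrow \calE$ is level~$\epsilon$ of $p$ it is in particular a subquality, hence above the centre of $p$; and $k$ is above $j$ by hypothesis, so Proposition~\ref{PropRestrictedSubquality} applies with its standing hypotheses met. Its items~2 and~3 immediately give that $\calE_j \rightarrow \calE_k$ is an essential subtopos and a subquality of $p k$. So the only real content is to prove that $\calE_j \rightarrow \calE_k$ is the \emph{largest} subquality of $p k$.

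To that end I would take an arbitrary subquality $\calE_l \rightarrow \calE_k$ of $p k$ and show that $\calE_j$ is above it in $\calE_k$. Being a subquality, $\calE_l$ is above the centre of $p k$, so item~4 of Proposition~\ref{PropRestrictedSubquality} gives that the composite $\calE_l \rightarrow \calE_k \rightarrow \calE$ is above the centre of $p$. Moreover the composite local geometric morphism $\calE_l \rightarrow \calE_k \rightarrow \calE \rightarrow \calS$ equals $p k$ composed with the inclusion $\calE_l \rightarrow \calE_k$, which is a quality type precisely because $\calE_l$ is a subquality of $p k$. Hence $\calE_l \rightarrow \calE$ is a subquality of $p$, and since $\calE_j$ is the \emph{largest} subquality of $p$ we conclude that $\calE_j$ is above $\calE_l$ as subtoposes of $\calE$.

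It then remains to transfer this last relation from $\calE$ down to $\calE_k$, which is the only genuine (if small) obstacle I foresee, since ``above'' is defined relative to a fixed ambient topos. Writing $a : \calE_j \rightarrow \calE_k$ and $b : \calE_l \rightarrow \calE_k$ for the two inclusions, Corollary~\ref{estaeralaproposicion3.1} characterizes ``$\calE_j$ above $\calE_l$'' inside $\calE_k$ by the factorization of $b_*$ through $a_*$. From the previous paragraph $k_* b_*$ factors through $k_* a_*$ inside $\calE$; applying $k^*$ and using that the counit $k^* k_* \rightarrow 1_{\calE_k}$ is invertible because $k_*$ is fully faithful, I recover a factorization of $b_*$ through $a_*$, so $\calE_j$ is above $\calE_l$ in $\calE_k$. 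As $\calE_l$ was arbitrary, $\calE_j \rightarrow \calE_k$ is above every subquality of $p k$, and combined with the first paragraph it is the largest subquality of $p k$; being essential, it is level~$\epsilon$ of $p k$ by Definition~\ref{DefLevelEpsilon}.

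Finally I would flag the one delicate point in the hypotheses: the subquality $\calE_l \rightarrow \calE$ produced in the second step need not be essential, so it is crucial to know that $\calE_j$ dominates \emph{all} subqualities of $p$, not merely the essential ones. This is exactly why the statement assumes that level~$\epsilon$ of $p$ is in fact the largest subquality, and it is the hypothesis I would be most careful to invoke at the right place.
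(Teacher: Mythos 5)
Your proof is correct and follows essentially the same route as the paper's: items~2 and~3 of Proposition~\ref{PropRestrictedSubquality} give that $\calE_j \rightarrow \calE_k$ is an essential subquality of $p k$, and item~4 together with the maximality of $\calE_j$ among subqualities of $p$ gives that it dominates any other subquality of $p k$. You are in fact somewhat more careful than the paper in two spots it leaves implicit --- noting that the quality-type condition is independent of the ambient topos (item~4 literally only yields ``above the centre''), and spelling out via Corollary~\ref{estaeralaproposicion3.1} and the invertible counit $k^* k_* \rightarrow 1$ why the ``above'' relation descends from $\calE$ to $\calE_k$ --- and your closing remark about why the hypothesis must concern \emph{all} subqualities, not only the essential ones, is exactly the right point to flag.
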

\begin{proof}
By the second item of Proposition~\ref{PropRestrictedSubquality}, the subtopos ${\calE_j \rightarrow \calE_k}$ is a subquality of ${p k : \calE_k \rightarrow \calS}$ and it is essential by the third item.
Now assume that ${\calE_l \rightarrow \calE_k}$ is a subquality of ${p k : \calE_k \rightarrow \calS}$.
Then ${\calE_l \rightarrow \calE_k \rightarrow \calE}$ is a subquality of $p$ by the fourth item.
So it is above ${\calE_j \rightarrow \calE}$ by hypothesis and then, 
${\calE_l \rightarrow \calE_k}$ is above ${\calE_j \rightarrow \calE_k}$.
\end{proof}

We can now start to discuss the example.
It is convenient to give first an alternative presentation of the Gaeta topos of $\complex$ discussed in Section~\ref{SecWeilSubquality}.
As in that Section, let $\calD$ be the opposite of the category of finitely generated $\complex$-algebras.
Let ${J_G}$ be the Gaeta coverage on $\calD$.
The basic covering families are those of the form 
\[ ( D_i \rightarrow D  \mid i \in I ) \]
such that $I$ is finite and the induced ${\sum_{i\in I} D_i \rightarrow D}$ is an isomorphism.
The intimate relation between products in $\opCat{\calD}$ and  idempotents implies that the $J_G$-cocovering families in $\opCat{\calD}$ are those of  the form
\[ ( A \rightarrow A[a_i^{-1}] \mid i \in I ) \]
where $I$ is a finite set, ${\sum_{i\in I} a_i = 1}$ and, for every ${i, j \in I}$, ${i \not= j}$ implies ${a_i a_j = 0}$.

Let ${\calC \rightarrow \calD}$ be the full subcategory determined by the (f.g.) algebras that have exactly two idempotents. As every object of $\calD$ is a finite coproduct of objects in $\calC$,
the inclusion ${\calC \rightarrow \calD}$ is $J_G$-dense and so the Comparison Lemma (C2.2.3 in \cite{elephant}) implies that restricting along the inclusion ${\calC \rightarrow \calD}$ underlies an equivalence ${\Sh(\calD, J_G) \rightarrow \Psh{\calC} = \mathfrak{G}}$ between the topos of sheaves ${\Sh(\calD, J_G)}$ and the topos of presheaves ${\Psh{\calC}}$ that we used to define the Gaeta topos in Section~\ref{SecWeilSubquality}.

Let ${J_Z}$ be the Zariski coverage on $\calD$.
It is well-known that $J_Z$-cocovering basic families in $\opCat{\calD}$ are those of  the form
\[ ( A \rightarrow A[a_i^{-1}] \mid i \in I ) \]
where $I$ is a finite set and the ideal generated by ${(a_i \mid i \in I)}$ contains $1$. 
The topos ${\Sh(\calD, J_Z)}$ will be denoted by ${\mathfrak{Z}}$ and is called the {\em Zariski topos} (determined by the field $\complex$) and ${\mathfrak{Z} = \Sh(\calD, J_Z) \rightarrow \Set}$ is pre-cohesive (see \cite{Lawvere07} and also Example~{1.5} in \cite{Johnstone2011}).

The above description of $J_Z$ and $J_G$ implies that every $J_G$-cover is a $J_Z$ cover.
That is, the Zariski topos is a subtopos of the Gaeta topos.
This presentation of the Zariski topos as a subtopos 
$$\xymatrix{
\mathfrak{Z} = \Sh(\calD, J_Z) \ar[r] &  \Sh(\calD, J_G) \ar[r]_-{\cong} &  \Psh{\calC} = \mathfrak{G}
}$$
(of the Gaeta topos) whose direct image is restriction along ${\calC \rightarrow \calD}$ is motivated by the discussion starting at the end of p.~109 in \cite{Lawvere76}.

\begin{lemma}\label{LemAboveEpsilon} The subtopos ${\mathfrak{Z} \rightarrow  \mathfrak{G}}$ is above the Weil subquality ${\mathfrak{W} \rightarrow  \mathfrak{G}}$.
\end{lemma}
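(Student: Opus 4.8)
The plan is to realise both subtoposes as sheaf subtoposes of $\mathfrak{G}$ on the single site $\calD$ and to reduce the statement to an inclusion of Grothendieck topologies. By Corollary~\ref{estaeralaproposicion3.1}, ``$\mathfrak{Z}$ is above $\mathfrak{W}$'' is equivalent to asking that the direct image of $\mathfrak{W} \rightarrow \mathfrak{G}$ factor through that of $\mathfrak{Z} \rightarrow \mathfrak{G}$, i.e.\ that every Weil sheaf be a Zariski sheaf. Since $\mathfrak{G} = \Sh(\calD, J_G)$ and $\mathfrak{Z} = \Sh(\calD, J_Z)$ with $J_G \subseteq J_Z$, I would first exhibit the Weil subquality as $\mathfrak{W} = \Sh(\calD, J_W)$ for a topology $J_W$ with $J_G \subseteq J_W$, and then prove $J_Z \subseteq J_W$; this inclusion gives $\Sh(\calD, J_W) \subseteq \Sh(\calD, J_Z)$, which is exactly the required factorisation. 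Throughout, I write $A$ for the $\complex$-algebra corresponding to an object $D$ of $\calD$, so that a map into $D$ from a Weil object is an algebra homomorphism $A \rightarrow B$ with $B$ a Weil algebra.

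To identify $J_W$, recall from the proof of Corollary~\ref{CorLargestSubqualityIsEssential} that level~$\epsilon$ of $\mathfrak{G}$, presented on $\calC$, is the rigid topology whose covers are the sieves containing all little figures (the maps whose domain has exactly one point), and that $\calC_{!}$ coincides with the subcategory $\calW$ of Weil algebras by Proposition~\ref{PropWeilSubquality}. I would take $J_W$ to be the analogous rigid topology on $\calD$, so that a sieve on $D$ is $J_W$-covering exactly when it contains every map into $D$ from a Weil algebra. The inclusion $J_G \subseteq J_W$ holds because a Weil algebra is local (Definition~\ref{DefWeilAlgebra}), hence indecomposable, so any homomorphism $A \rightarrow B$ with $B$ Weil factors through a single factor $A \rightarrow A[a_j^{-1}]$ of a Gaeta cocover; thus the sieve generated by such a cocover contains all maps to Weil algebras. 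Since $\calC \rightarrow \calD$ is $J_G$-dense and $J_G \subseteq J_W$, it is $J_W$-dense, and the restriction of $J_W$ along $\calC \rightarrow \calD$ is precisely the little-figures topology on $\calC$; the Comparison Lemma (C2.2.3 in \cite{elephant}) then identifies $\Sh(\calD, J_W)$ with $\Psh{\calW} = \mathfrak{W}$ as a subtopos of $\mathfrak{G}$.

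The heart of the argument is the inclusion $J_Z \subseteq J_W$, and it is a direct consequence of the locality of Weil algebras. Let $(A \rightarrow A[a_i^{-1}] \mid i \in I)$ be a basic Zariski cocover, so the ideal generated by $(a_i \mid i \in I)$ is all of $A$. Given any homomorphism $\psi : A \rightarrow B$ with $B$ a Weil algebra, the elements $\psi(a_i)$ generate the unit ideal of $B$; as $B$ is local with maximal ideal $\mathfrak{m}$, they cannot all lie in $\mathfrak{m}$, so some $\psi(a_j)$ is invertible. By the universal property of localisation, $\psi$ factors through $A \rightarrow A[a_j^{-1}]$, so the corresponding map into $D$ lies in the sieve generated by the cocover. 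Hence this sieve contains every map from a Weil algebra and is therefore $J_W$-covering, proving $J_Z \subseteq J_W$.

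Assembling the two inclusions yields $\Sh(\calD, J_W) \subseteq \Sh(\calD, J_Z)$ inside $\mathfrak{G}$, that is, every Weil sheaf is a Zariski sheaf, and Corollary~\ref{estaeralaproposicion3.1} concludes that $\mathfrak{Z} \rightarrow \mathfrak{G}$ is above $\mathfrak{W} \rightarrow \mathfrak{G}$. I expect the main obstacle to lie in the second paragraph rather than the third: transporting the rigid level~$\epsilon$ topology from $\calC$ to $\calD$ and invoking the Comparison Lemma to recognise $\Sh(\calD, J_W)$ as the Weil subquality (and not merely an abstractly equivalent subtopos) requires a little care with the density of $\calC \rightarrow \calD$ and the compatibility of the two inclusions into $\mathfrak{G}$. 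Once $J_W$ is pinned down, the locality of Weil algebras makes the decisive inclusion $J_Z \subseteq J_W$ essentially immediate.
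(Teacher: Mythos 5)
Your argument is correct, but it takes a genuinely different route from the paper's. You push the Weil subtopos up to the big site: you present $\mathfrak{W}$ as $\Sh(\calD,J_W)$ for the topology $J_W$ whose covering sieves are those containing every map from a Weil algebra, check ${J_G\subseteq J_W}$, identify $\Sh(\calD,J_W)$ with ${\Psh{\calC_!}\rightarrow\Psh{\calC}}$ via the Comparison Lemma, and then reduce the lemma to the inclusion of topologies ${J_Z\subseteq J_W}$, which you get from locality of Weil algebras (some $\psi(a_j)$ is a unit, so $\psi$ factors through $A[a_j^{-1}]$). The paper instead pulls the Zariski topology down to the small site: it invokes Lemma~{C2.3.9} of \cite{elephant} to produce the smallest coverage $K$ on $\calC_!$ making ${\calC_!\rightarrow(\calD,J_Z)}$ cover-reflecting, shows $K$ is trivial because in a Weil algebra every element is nilpotent or invertible (so every Zariski cocover of a Weil algebra contains an isomorphism), and concludes from the pullback square supplied by that lemma. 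Both proofs rest on the same algebraic fact --- a Zariski cover seen from a local algebra must contain a unit --- but yours trades the cover-reflecting/pullback machinery for the more elementary bookkeeping of comparing Grothendieck topologies on a single site. The price is exactly the verification you flag: that $J_W$ is a Grothendieck topology (it is --- transitivity uses that the identity on a Weil object is itself a map from a Weil algebra) and that the Comparison Lemma equivalence is compatible with the two inclusions into $\mathfrak{G}$, so that $\Sh(\calD,J_W)$ is the Weil subquality as a subtopos and not merely up to abstract equivalence; both points check out, so the proof goes through.
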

\begin{proof}
Recall from Section~\ref{SecWeilSubquality} that we identified the Weil subquality with the geometric inclusion induced by the full subcategory  ${\calC_!  \rightarrow \calC}$. 
Consider now the full inclusion ${\calC_! \rightarrow \calC \rightarrow \calD}$.
By Lemma~{C2.3.9} in \cite{elephant} there exists a smallest coverage $K$ on $\calC_!$ such that the inclusion into $\calD$ is cover reflecting.
In that result, $K$ is defined as the Grothendieck coverage generated by the sieves of the form
${R \cap \calC_{!}}$ where $R$ is ${J_Z}$-covering. We show below that all these sieves  contain an iso, which will allow us to conclude that $K$ is trivial.

Consider an object $A$  in $\opCat{\calC_!}$.
Since it has a unique maximal ideal,  Exercise 5.24 in \cite{AtiyahMacdonald} implies that the nilradical of $A$ is a maximal ideal. 
Thus, by Exercise 1.10 loc.~cit., every element of $A$ is either nilpotent or invertible. 
As $A$ is non-trivial, a Zariski cover cannot be generated by nilpotents, so every $J_Z$-cocover of $A$ contains an isomorphism. 
In other words, for every $C$ in $\calC_!$, the only $J_Z$-covering sieve of $C$ as an object of $\calD$ is the maximal one. This implies that $K$ is trivial.

The proof of  Lemma~{C2.3.9} cited above shows that the outer square below
$$\xymatrix{
\Sh(\calC_!, K) \ar[d] \ar[rr] &&  \ar@{.>}[ld] \Sh(\calD, J_Z) \ar[d] \\
\Psh{\calC_!} \ar[r] & \Psh{\calC} \ar[r] & \Psh{\calD}
}$$
is a pullback. As the right vertical map factors through ${\Psh{\calC} \rightarrow \Psh{\calD}}$,
the inner polygon is also a pullback and, since the left vertical map is an isomorphism,
${\Sh(\calD, J_Z) \rightarrow  \Psh{\calC}}$ is above ${\Psh{\calC_!} \rightarrow  \Psh{\calC}}$.
\end{proof}

We may now identify level~$\epsilon$ of the Zariski topos of $\mathbb{C}$.

\begin{theorem}[The Weil subquality is level~$\epsilon$ of the Zariski topos]\label{ThmZariski} 
The subtopos ${\mathfrak{W} \rightarrow \mathfrak{Z}}$ is level~$\epsilon$ of the pre-cohesive ${p : \mathfrak{Z} \rightarrow \Set}$ and it coincides with the largest subquality of $p$.
\end{theorem}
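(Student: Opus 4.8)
The plan is to deduce the theorem directly from the general restriction machinery of Corollary~\ref{CorRestrictedLargestSubquality}, using the two substantive facts already established: that $\mathfrak{W} \to \mathfrak{G}$ is level~$\epsilon$ of the Gaeta topos and coincides there with the largest subquality (Theorem~\ref{ThmLevelEpsilonOfGaeta}), and that the Zariski topos sits above the Weil subquality (Lemma~\ref{LemAboveEpsilon}). Everything difficult has already been done in those two results, so the remaining task is an assembly.

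Concretely, I would instantiate Corollary~\ref{CorRestrictedLargestSubquality} with $\calE = \mathfrak{G}$, $\calS = \Set$, the pre-cohesive (in particular local) ${p : \mathfrak{G} \to \Set}$, the largest subquality ${\calE_j = \mathfrak{W}}$, and the subtopos ${k : \calE_k = \mathfrak{Z} \to \mathfrak{G}}$. The hypotheses of the corollary are met: $\mathfrak{W} \to \mathfrak{G}$ is the largest subquality of $p$ and is essential by Theorem~\ref{ThmLevelEpsilonOfGaeta}, while $k$ is above $j$ by Lemma~\ref{LemAboveEpsilon}. The conclusion then states that ${\mathfrak{W} \to \mathfrak{Z}}$ is the largest subquality of the composite ${p k : \mathfrak{Z} \to \Set}$ and that it is essential, hence is level~$\epsilon$ of $pk$.

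The only point that needs to be pinned down is the identification of $pk$ with the pre-cohesive ${p : \mathfrak{Z} \to \Set}$ named in the statement. Since ${\mathfrak{Z} \to \mathfrak{G}}$ is a subtopos inclusion and ${\mathfrak{G} \to \Set}$ is the canonical (structural) geometric morphism, their composite is precisely the canonical geometric morphism ${\mathfrak{Z} \to \Set}$, which was already observed above to be pre-cohesive. Thus $pk$ agrees with the $p$ of the statement, and the conclusion of Corollary~\ref{CorRestrictedLargestSubquality} is exactly the assertion of the theorem.

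I do not anticipate any genuine obstacle: the proof is bookkeeping that glues together Theorem~\ref{ThmLevelEpsilonOfGaeta} and Lemma~\ref{LemAboveEpsilon} via Corollary~\ref{CorRestrictedLargestSubquality}. The only care required is to confirm that the composite of geometric morphisms coincides with the structural $p$ on $\mathfrak{Z}$, so that the words \emph{largest subquality}, \emph{essential}, and \emph{level~$\epsilon$} in the corollary's conclusion transfer verbatim to the statement of the theorem.
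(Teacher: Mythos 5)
Your proposal is correct and follows exactly the paper's own argument: Theorem~\ref{ThmLevelEpsilonOfGaeta} and Lemma~\ref{LemAboveEpsilon} supply the hypotheses of Corollary~\ref{CorRestrictedLargestSubquality}, which yields the conclusion. The extra care you take in identifying the composite $pk$ with the canonical pre-cohesive ${\mathfrak{Z} \rightarrow \Set}$ is a reasonable point the paper leaves implicit.
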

\begin{proof}
By Theorem~\ref{ThmLevelEpsilonOfGaeta}, the subtopos ${\mathfrak{W} \rightarrow \mathfrak{G}}$ is level~$\epsilon$ of the pre-cohesive ${p: \mathfrak{G} \rightarrow \Sets}$ and it coincides with the largest subquality of $p$.
Lemma~\ref{LemAboveEpsilon} shows ${\mathfrak{Z} \rightarrow\mathfrak{G}}$ is above ${\mathfrak{W} \rightarrow \mathfrak{G}}$ so Corollary~\ref{CorRestrictedLargestSubquality} applies.
\end{proof}

Corollary~\ref{CorRestrictedLargestSubquality} suggests the question: what is the largest subtopos of the Zariski topos that contains the Weil topos?
In any case, the calculation of the Aufhebung of $\epsilon$ still needs to be carried out.

\section*{Acknowledgements}
This paper slowly grew out of conversations with F.~W.~Lawvere during a seminar organized by F.~Marmolejo, which took place in April 2014 at Ciudad de M\'exico. We also thank the referee for several constructive suggestions. The second author would also like to thank the support of  Universit\`a di Bologna and funding from the European
Union's Horizon 2020 research and innovation programme under the Marie Sk\l odowska-Curie grant agreement No. 690974.


\end{document}